\documentclass[reqno, 12pt]{amsart}
\usepackage[T1]{fontenc}    
\usepackage[utf8]{inputenc} 
\usepackage{lmodern}       
\usepackage{mathrsfs}
\usepackage{amscd}
\usepackage{amsmath}
\usepackage{latexsym}
\usepackage{amsfonts}
\usepackage{amssymb}
\usepackage{amsthm}
\usepackage{graphicx}
\usepackage{makecell}
\usepackage{array}
\usepackage{booktabs}
\usepackage{multirow}
\usepackage{color,xcolor}
\usepackage{comment}
\allowdisplaybreaks[4] 

\usepackage[a4paper,margin=1in]{geometry}
\usepackage{fancyhdr}

\newtheorem{theorem}{Theorem}[section]

\newtheorem{corollary}[theorem]{Corollary}
\newtheorem{lemma}[theorem]{Lemma}

\newtheorem{remark}[theorem]{Remark}

\usepackage[colorlinks=true, linkcolor=blue]{hyperref} 
\numberwithin{equation}{section}

\author[Qiu]{Dong Qiu}
\address{School of Mathematical Sciences, Zhejiang University}
\email{qiudong@zju.edu.cn}

\author[Xu]{Xiang Xu}
\address{School of Mathematical Sciences, and Center for Interdisciplinary Applied Mathematics, Zhejiang University}
\email{xxu@zju.edu.cn}

\author[Ye]{Yeqiong Ye}
\address{School of Mathematical Sciences, Zhejiang University}
\email{yeyeqiong@zju.edu.cn}

\author[Zhou]{Ting Zhou}
\address{School of Mathematical Sciences, Zhejiang University}
\email{ting\_zhou@zju.edu.cn}
\subjclass[2020]{35R30, 35L05}
\keywords{Jordan--Moore--Gibson--Thompson equation, inverse boundary value problem, second order linearization, geometric optics solutions}

\title[Gauge symmetry and uniqueness for the JMGT equation]{Gauge symmetry and uniqueness in inverse problems for the JMGT equation}

\begin{document}

\begin{abstract}
In this paper, we study an inverse boundary value problem for the Jordan--Moore--Gibson--Thompson equation on a simple Riemannian manifold. We consider an all boundary measurement map that maps Dirichlet boundary data and initial data to the corresponding Neumann-type boundary data and final-time data. Our main result shows that the nonlinear acoustic coefficient $\beta$ is uniquely determined by this measurement map, and the linear damping coefficients $\alpha$ and $q$, along with the internal source term $F$, can be recovered up to a gauge symmetry. As a corollary, we also identify several cases in which all coefficients are uniquely recovered. The proof relies on  the method of first order and second order linearization and on the construction of geometric optics solutions. In the intermediate step,  we establish the unique recovery of the lower-order coefficients in the linearized MGT equation.
\end{abstract}
\maketitle

\section{Introduction}

\subsection{Statement of the problem}
In this article, we consider an inverse boundary value problem for the Jordan--Moore--Gibson--Thompson (JMGT) equation, a nonlinear hyperbolic model arising in ultrasound imaging. 
By incorporating thermoviscous dissipation and relaxation effects, the JMGT equation captures finite-speed wave propagation together with frequency-dependent attenuation (see  \cite{jordan2008nonlinear,jordan2014second,moore1960propagation,Thompson1972}), and serves as an important nonlinear acoustic model with applications in medical ultrasound imaging. 

Let $(M,g)$ be a compact connected Riemannian manifold of dimension $n\ge2$ with a smooth boundary $\partial M$. We set $\mathcal{Q}:=(0,T) \times M$ and $\Gamma=(0,T) \times \partial M$.
Consider the generalized JMGT equation with sources in the following form
\begin{equation}\label{JMGTso}
\left\{
\begin{aligned}
&\partial_t^3u + \alpha \partial_{t}^2 u  - \Delta_g \partial_t u-c^2\Delta_g u+q \partial_t u = \partial_t \left(\beta(t,x)(\partial_tu)^2  \right) +F
    && \text{in } \mathcal{Q}, \\
&u= f
    && \text{on } \Gamma, \\
&u(0,\cdot)=h^{(0)},  \ \partial_t u(0,\cdot) = h^{(1)},\ \partial_t^2u(0,\cdot)=h^{(2)} 
    && \text{in } M.
\end{aligned}
\right.
\end{equation}
Here $\alpha$ and $q$ describe damping effects in the medium, $c$ is the sound speed, $g$ encodes the anisotropic geometry of the medium, $\beta$ characterizes the acoustic nonlinearity of the medium, and $F$ denotes an applied source term. In particular, $\alpha,c$ and $q$ govern the linear propagation and attenuation, while $\beta$ is responsible for nonlinear effects in ultrasound propagation.

For a fixed sound speed $c$, we define the corresponding \textit{all boundary measurement map} as
\begin{equation}
\mathcal L_{\alpha,q,\beta,F}: \big(f,h^{(0)},h^{(1)},h^{(2)}\big) \to \bigl((\partial_{\nu}\partial_t u +c^2\partial_{\nu }u)|_{\Gamma},u(T,\cdot),\partial_t u(T,\cdot),\partial_t^2 u(T,\cdot) \bigr),
\end{equation}
where $u$ is the solution of \eqref{JMGTso} and $\nu$ denotes the unit outward normal vector of $\partial M$ with respect to the Riemannian metric $g$. 
We refer the reader to Section~\ref{well} for local well-posedness of the JMGT equation \eqref{JMGTso}, which guarantees $\mathcal L_{\alpha,q,\beta,F}$ is well-defined.  
The inverse problem that we consider is the following:

\textbf{Inverse Problem}: Does the all boundary measurement map $\mathcal L_{\alpha,q,\beta,F}$  determine $\alpha,q,\beta,F$ uniquely or  up to a natural gauge symmetry?

Before presenting our main results, we introduce the following notation and definitions. 
For a nonnegative integer $m$, we define the following function spaces:
\begin{align*}
&A_{m} = \bigcap_{k=0}^{m+1} C^{k}\bigl([0, T]; H^{m+\frac{3}{2}-k}(\partial M)\bigr) \cap H^{m+2}(\Gamma), \
N_{m+1} =
\bigcap_{k=1}^{m+1}H^{k}\bigl([0, T]; H^{m+1-k}(\partial M)\bigr), \\
& O=A_m \times H^{m+2}(M)\times H^{m+1}(M)\times H^{m}(M),\\
& R=\{(f,h^{(0)},h^{(1)},h^{(2)})\in O: (f,h^{(0)},h^{(1)},h^{(2)})   \text{ satisfies the compatibility condition } \eqref{compaMGT}   \}.
\end{align*}
We work in the \textit{energy spaces} $E^m$, defined by 
\begin{equation*}
E^m=\bigcap\limits_{k=0}^{m} C^k\left([0,T];H^{m-k}(M)\right),
\end{equation*}
equipped with the norm 
\begin{equation*}
\|u\|_{E^m}:=\sup_{t\in[0,T]}\sum_{k=0}^{m}\|\partial_t^k u(\cdot,t)\|_{H^{m-k}(M)}.
\end{equation*}
The space $E^m$ is an algebra if $m >n+1$ (see \cite{choquet-bruhat_general_2009}). Moreover, it satisfies the norm estimate
\begin{equation*}
||uv||_{E^m}\leq C_m ||u||_{E^m}||v||_{E^m},  \quad \text{for all } u,v\in E^m,
\end{equation*}
where $C_m$ is a constant depending on $m$. 

For the compatibility condition, we define the admissible function spaces for potentials $\alpha$ and $q$:
\begin{align*}
E^m_{0,k}=\{ a\in E^m:  \partial_t^j a=0 \text{ on } \{t=0\}\times  \partial M \text{ for } j=0,1,\cdots,k  \}.
\end{align*}

\subsection{Main results} Throughout this paper, we assume that $m$ is an integer, $m>n+1$, and $0<c_0\le c(x)\le c_1 \ \text{in }  M$. 
\begin{theorem}\label{prop1.1}
Assume that $c\in C^\infty(M)$, $\alpha, q,F \in E_{0,m-2}^m$ and $\beta \in E_{0,m-1}^{m+1}$.
Let $(f_0,h^{(0)}_0,h^{(1)}_0,h^{(2)}_0)\in R$ and let $u_0\in E^{m+2}$ be the unique solution to  \eqref{JMGTso} with boundary and initial data
$(f_0,h^{(0)}_0,h^{(1)}_0,h^{(2)}_0)$.
Then there exists $\varepsilon>0$ such that, for any
$(f,h^{(0)},h^{(1)},h^{(2)})\in G_\varepsilon$,
where
\begin{align}
G_\varepsilon=\big\{(f,h^{(0)},h^{(1)},h^{(2)})\in R: \ &\|f-f_0\|_{H^{m+2}(\Gamma)}
+\|h^{(0)}-h^{(0)}_0\|_{H^{m+2}(M)} \nonumber
\\
&
+\|h^{(1)}-h^{(1)}_0\|_{H^{m+1}(M)}
+\|h^{(2)}-h^{(2)}_0\|_{H^m(M)}
<\varepsilon
\big\},   \label{Gep}
\end{align}
there exists a unique solution $u\in E^{m+2}$ to \eqref{JMGTso} with
$\partial_\nu u\in N_{m+1}$,
satisfying
\begin{equation}\label{wellestimate}
\|u-u_0\|_{E^{m+2}}+\|\partial_\nu (u-u_0)\|_{N_{m+1}}<C\varepsilon.
\end{equation}
\end{theorem}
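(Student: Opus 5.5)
We prove Theorem~\ref{prop1.1} by a perturbation argument around $u_0$, combining the linear well-posedness theory of the MGT equation in the energy spaces $E^{m+2}$ with the Banach fixed point theorem.

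\emph{Reduction to zero data.} Write $u=u_0+w$. Then $w$ solves
\[
\partial_t^3 w+\alpha\partial_t^2 w-\Delta_g\partial_t w-c^2\Delta_g w+q\partial_t w-2\partial_t\bigl(\beta\,\partial_t u_0\,\partial_t w\bigr)=\partial_t\bigl(\beta(\partial_t w)^2\bigr),
\]
with boundary data $f-f_0$ and initial data $h^{(j)}-h^{(j)}_0$, all of size $<\varepsilon$. The source $F$ has been absorbed into $u_0$. The left-hand side is a linear MGT operator whose lower-order coefficients involve $\beta\partial_t u_0$ and $\partial_t(\beta\partial_t u_0)$. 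These lie in $E^{m+1}$ and $E^{m}$ respectively, because $\beta\in E^{m+1}$, $u_0\in E^{m+2}$, and $E^m$ is an algebra for $m>n+1$.

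\emph{Linear estimate.} For the linear MGT problem
\[
\partial_t^3 v+a\partial_t^2 v-\Delta_g\partial_t v-c^2\Delta_g v+b\partial_t v=G,
\]
with $a,b\in E^m$, data in $R$, and $G\in E^{m}$ (with the compatibility conditions, which the vanishing assumptions on $\alpha,q,\beta$ at $\{t=0\}\times\partial M$ are designed to preserve), we first establish
\[
\|v\|_{E^{m+2}}+\|\partial_\nu v\|_{N_{m+1}}\le C\bigl(\text{data norms}+\|G\|_{E^{m}}\bigr).
\]
The approach is to write $z=\partial_t v+c^2 v$. Then $\partial_t^2 z-\Delta_g z$ equals $G$ plus lower-order terms in $v$, since
\[
\partial_t^3 v-\Delta_g\partial_t v-c^2\Delta_g v=\partial_t^2 z-\Delta_g z-c^2\partial_t^2 v .
\]
Because $c$ depends on $x$, we use $\Delta_g(c^2 v)$ carefully; the commutator terms are of lower order. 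We then recover $v$ from the ODE $\partial_t v+c^2 v=z$. Standard hyperbolic regularity for the wave equation gives $z$ in the energy spaces, and the hidden trace regularity of Lasiecka--Lions--Triggiani gives $\partial_\nu z$. Higher regularity follows by differentiating in $t$ and using the equation to trade time derivatives for spatial ones. The zeroth-order coupling is handled with Gronwall's inequality.

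\emph{Fixed point.} Define $\Phi(w)=v$, where $v$ solves the linearized problem with the given small data and right-hand side $\partial_t\bigl(\beta(\partial_t w)^2\bigr)$. The key loss-of-derivative check is as follows. If $w\in E^{m+2}$, then $\partial_t w\in E^{m+1}$, so $\beta(\partial_t w)^2\in E^{m+1}$ by the algebra property. Its time derivative therefore lies in $E^{m}$, which is exactly the regularity allowed for $G$. This gives
\[
\|\Phi(w)\|_{E^{m+2}}\le C\varepsilon+C\|w\|_{E^{m+2}}^2
\]
and
\[
\|\Phi(w_1)-\Phi(w_2)\|_{E^{m+2}}\le C\bigl(\|w_1\|_{E^{m+2}}+\|w_2\|_{E^{m+2}}\bigr)\|w_1-w_2\|_{E^{m+2}}.
\]
Hence $\Phi$ is a contraction on the ball of radius $2C\varepsilon$ in $E^{m+2}$ (with compatible data) once $\varepsilon$ is small. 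The unique fixed point $w$ yields $u$, and the estimate \eqref{wellestimate} follows from the linear trace bound applied to the final equation. Uniqueness within this small ball follows from the contraction property.

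\emph{Main obstacle.} The hardest step is the linear estimate with the full $E^{m+2}$ and $N_{m+1}$ regularity under the compatibility conditions. In particular, one must verify that the time-differentiated problems keep compatible data. This is where the assumptions $\alpha,q\in E^m_{0,m-2}$ and $\beta\in E^{m+1}_{0,m-1}$ enter: they ensure that the compatibility condition \eqref{compaMGT} for $w$ follows from that of the data.
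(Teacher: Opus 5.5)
Your proposal is correct and follows the paper's proof. The paper likewise writes $u=u_0+\bar u$, treats $\alpha-2\beta\partial_t u_0$ and $q-2\partial_t(\beta\partial_t u_0)$ as coefficients in $E^m_{0,m-2}$ of a linear MGT operator, and runs a Banach fixed point on a small ball of $E^{m+2}$ with source $\partial_t(\beta(\partial_t w)^2)\in E^m$, using the vanishing of $\partial_t^j\beta$ on $\{t=0\}\times\partial M$ for compatibility, just as you do. The only divergence is in how the linear estimate (Lemma~\ref{lemma2.2}) is obtained: the paper does not use a $z=\partial_t v+c^2v$ wave reduction, but cites an energy estimate for time-independent coefficients and absorbs $\alpha(t,x)\partial_t^2 v+q(t,x)\partial_t v$ as a source via a short-time contraction followed by continuation in time.
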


For the inverse problems below, we further assume that $(M,g)$ is simple, that is $(M,g)$ is non-trapping, the boundary $\partial M$ is strictly convex and there are no conjugate points.
\begin{theorem}\label{thme2}
Assume that $c\in C^\infty(M)$ is fixed, $\alpha_j, q_j, F_j\in E_{0,m-2}^m$ and $\beta_j \in E_{0,m-1}^{m+1}$, for $j=1,2$.
Let $(f_0,h^{(0)}_0,h^{(1)}_0,h^{(2)}_0)\in R$ and let $u_{0,j} \in E^{m+2}$ be the unique solution to \eqref{JMGTso} with boundary and initial data $(f_0,h^{(0)}_0,h^{(1)}_0,h^{(2)}_0)$, with $(\alpha,q,\beta,F)$ replaced by $(\alpha_j,q_j,\beta_j,F_j)$, for $j=1,2$.
Let $\mathcal L_{\alpha_j,q_j,\beta_j,F_j}$ denote the corresponding all boundary measurement map of \eqref{JMGTso} with
$(\alpha,q,\beta,F)$
replaced by
$(\alpha_j,q_j,\beta_j,F_j)$, for $j=1,2$.
Choose $\varepsilon>0$ sufficiently small so that both maps 
$\mathcal L_{\alpha_1,q_1,\beta_1,F_1}$ and 
$\mathcal L_{\alpha_2,q_2,\beta_2,F_2}$ are well-defined on 
$G_\varepsilon$.
If there exists a nonempty neighborhood $\mathcal N \subset G_\varepsilon$ containing $(f_0,h^{(0)}_0,h^{(1)}_0,h^{(2)}_0)$ such that
\[
\mathcal L_{\alpha_1,q_1,\beta_1,F_1}\big(f,h^{(0)},h^{(1)},h^{(2)}\big)
=
\mathcal L_{\alpha_2,q_2,\beta_2,F_2}\big(f,h^{(0)},h^{(1)},h^{(2)}\big),
\qquad \forall \big(f,h^{(0)},h^{(1)},h^{(2)}\big)\in \mathcal N,
\]
then we have
\[
\beta_2=\beta_1=:\beta,\qquad
\alpha_2=\alpha_1+2\beta\,\partial_t\psi,\qquad
q_2=q_1+2\partial_t(\beta\partial_t \psi),
\]
and
\[
F_2
=
F_1+\partial_t^3\psi+\alpha_1\partial_t^2\psi-\Delta_g\partial_t\psi-c^2\Delta_g\psi
+q_1\partial_t\psi+\partial_t\bigl(\beta(\partial_t\psi)^2\bigr),
\]
for some $\psi\in E^{m+2}$ with $\psi|_{\Gamma}=(\partial_{\nu} \partial_t \psi+c^2 \partial_{\nu} \psi)|_{\Gamma}=0$, $\psi(0,\cdot)=\partial_t \psi(0,\cdot)=\partial_t^2 \psi(0,\cdot)=0$ in $M$ and $\psi(T,\cdot)=\partial_t \psi(T,\cdot)=\partial_t^2\psi(T,\cdot)=0$ in $M$.
\end{theorem}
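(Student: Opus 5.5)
We linearize the measurement map around the background solutions $u_{0,j}$ and set $\psi := u_{0,2}-u_{0,1}$. Both $u_{0,j}$ have the same data $(f_0,h^{(0)}_0,h^{(1)}_0,h^{(2)}_0)$, and equality of the maps at that point gives equal Neumann-type data and equal final data. Hence $\psi\in E^{m+2}$ satisfies all the stated boundary, initial and final conditions. The gauge relations will then come from comparing the linearized equations around $u_{0,1}$ and $u_{0,2}$.

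\textbf{First-order linearization.} Take data $(f_0,h_0^{(\cdot)})+\varepsilon_1(f_1,h_1^{(\cdot)})$, which lies in $\mathcal N$ for small $\varepsilon_1$ as long as the perturbation satisfies the linear compatibility conditions. By Theorem~\ref{prop1.1} the solutions $u_j^{\varepsilon_1}$ depend smoothly on $\varepsilon_1$, so $v_j=\partial_{\varepsilon_1}u_j^{\varepsilon_1}|_{\varepsilon_1=0}$ solves the linearized MGT equation
\[
\partial_t^3 v+\tilde\alpha_j\partial_t^2 v-\Delta_g\partial_t v-c^2\Delta_g v+\tilde q_j\partial_t v=0,
\]
\[
\tilde\alpha_j=\alpha_j-2\beta_j\partial_t u_{0,j},\qquad \tilde q_j=q_j-2\partial_t(\beta_j\partial_t u_{0,j}).
\]
The linearized all-boundary maps for $(\tilde\alpha_1,\tilde q_1)$ and $(\tilde\alpha_2,\tilde q_2)$ coincide. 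We then invoke the intermediate uniqueness result for the linearized MGT equation, which gives $\tilde\alpha_1=\tilde\alpha_2$ and $\tilde q_1=\tilde q_2$. The proof of that result is the \emph{main obstacle}. I would prove it through an integral identity obtained by pairing with solutions of the adjoint equation; the all-boundary data (including final-time data) kill every boundary term. I would then use geometric optics solutions $v=e^{i\lambda(\varphi-t)}(a_0+\lambda^{-1}a_1)+R_\lambda$ with $\varphi$ built from the metric $c^{-2}g$, a principal symbol factor $(\partial_t-c^2\Delta_g)$ and the wave part. The leading order in $\lambda$ should yield the light-ray transform of $\tilde\alpha_1-\tilde\alpha_2$, weighted by an attenuation factor. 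Injectivity on simple manifolds (via the geodesic ray transform after Fourier transform in $t$) then gives $\tilde\alpha_1=\tilde\alpha_2$, and the next order in $\lambda$ gives $\tilde q_1=\tilde q_2$. One difficulty is the parabolic-type factor $\partial_t-\Delta_g$ in the third-order operator: the MGT symbol factors into a wave part and a damped part, and I would need remainder estimates uniform in $\lambda$.

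\textbf{Second-order linearization.} Take two-parameter data $\varepsilon_1(\cdot)_1+\varepsilon_2(\cdot)_2$. The function $w_j=\partial^2_{\varepsilon_1\varepsilon_2}u_j|_0$ solves the same linearized operator $P:=P_{\tilde\alpha,\tilde q}$, now common to $j=1,2$, with source $2\partial_t(\beta_j\partial_t v^{(1)}\partial_t v^{(2)})$. Here the $v^{(k)}$ are common to both $j$, since the linear operators and data agree. The difference $w_1-w_2$ has zero boundary, initial and final data. Pairing with an adjoint solution $v^{(0)}$ gives
\[
\int_{\mathcal Q}(\beta_1-\beta_2)\,\partial_t v^{(1)}\partial_t v^{(2)}\,\partial_t v^{(0)}=0.
\]
Choosing geometric optics solutions with phases such that the product of the oscillatory terms is non-oscillatory along a light ray, for example $v^{(1)},v^{(2)}$ with opposite phases and $v^{(0)}$ non-oscillatory but general, yields the vanishing of light-ray-type integrals of $\beta_1-\beta_2$ times amplitudes. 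By the same ray-transform injectivity this gives $\beta_1=\beta_2=\beta$.

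\textbf{Gauge relations.} With $\beta$ common, the equalities $\tilde\alpha_1=\tilde\alpha_2$ and $\tilde q_1=\tilde q_2$ give $\alpha_2=\alpha_1+2\beta\partial_t\psi$ and $q_2=q_1+2\partial_t(\beta\partial_t\psi)$. Finally we subtract the equation for $u_{0,1}$ from that for $u_{0,2}$. Expanding
\[
(\partial_t u_{0,1}+\partial_t\psi)^2=(\partial_t u_{0,1})^2+2\partial_tu_{0,1}\partial_t\psi+(\partial_t\psi)^2
\]
and substituting the relations for $\alpha_2,q_2$, all terms involving $u_{0,1}$ cancel. The remaining identity is exactly the stated formula for $F_2-F_1$ in terms of $\psi$.
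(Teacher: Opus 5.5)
Your overall route is the paper's: first-order linearization plus Theorem \ref{unirecover} to get $\tilde\alpha_1=\tilde\alpha_2$ and $\tilde q_1=\tilde q_2$, then second-order linearization to get $\beta_1=\beta_2$, then the gauge algebra for $\alpha,q,F$. The gauge algebra is correct.

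\textbf{The gap is in the second-order step.} The pairing does not give $\int_{\mathcal Q}\delta\beta\,\partial_t v^{(1)}\partial_t v^{(2)}\partial_t v^{(0)}=0$. It gives $\int_{\mathcal Q}2\partial_t(\delta\beta\,\partial_t v^{(1)}\partial_t v^{(2)})\,v^{(0)}\,dV_g\,dt=0$. Moving $\partial_t$ onto $v^{(0)}$ produces $\bigl[\int_M 2\delta\beta\,\partial_t v^{(1)}\partial_t v^{(2)}v^{(0)}\,dV_g\bigr]_{t=0}^{t=T}$, and this does not vanish. The reason is that $\beta_j$ vanishes at $t=0$ only on $\partial M$, and the $v$'s have arbitrary data at $t=0$ and $t=T$.

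With your suggested choice ($v^{(1)},v^{(2)}$ forward GO solutions with opposite phases, $v^{(0)}$ non-oscillatory), the product $\partial_tv^{(1)}\partial_tv^{(2)}=\rho^2 a a'+O(\rho)$ is non-oscillatory. So these endpoint terms are of the same order $\rho^2$ as the bulk term and cannot be dropped.

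There is a second problem with this choice. Both forward amplitudes carry $e^{-\frac12\int_0^t(\tilde\alpha-c^2)(\tau,r+t-\tau,\theta)\,d\tau}$, since the transport equation $T_2a_0=0$ is the same for either sign of the phase. Their product therefore leaves a ray-dependent attenuation weight. The resulting weighted light-ray transform does not decouple under Fourier transform in $s$, so ``the same ray-transform injectivity'' does not apply.

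\textbf{The missing idea is to make the oscillating pair one forward solution and one adjoint solution.} Take $v^{(2)}=e^{\mathrm{i}\rho(t+\varphi)}(a_{2,0}+\rho^{-1}a_{2,1})+R_{2,\rho}$ with $Pv^{(2)}=0$, and $v^{(0)}=e^{-\mathrm{i}\rho(t+\varphi)}(a_{0,0}+\rho^{-1}a_{0,1})+R_{0,\rho}$ with $P^{\star}v^{(0)}=0$. Keep $v^{(1)}$ arbitrary.

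Then the outer $\partial_t$ falls on the phase, and the $\rho^2$ coefficient is $-2\int_{\mathcal Q}\delta\beta\,\partial_tv^{(1)}a_{2,0}a_{0,0}$, with no endpoint terms. Moreover $a_{2,0}a_{0,0}=\chi(r+t)h(\theta)b^{-1/2}$, because the adjoint amplitude carries $e^{+\frac12\int_0^t(\tilde\alpha-c^2)}$. This gives the unweighted light-ray transform of $\delta\beta\,\partial_tv^{(1)}$. Hence $\delta\beta\,\partial_tv^{(1)}=0$ for every solution $v^{(1)}$, and a further GO choice of $v^{(1)}$ gives $\delta\beta=0$.

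\textbf{Your geometric optics sketch also uses the wrong geometry.} The principal part of $P$ is $\partial_t(\partial_t^2-\Delta_g)$, and the $\rho^3$ coefficient is $(1-|\nabla_g\varphi|_g^2)a$. So the eikonal equation is $|\nabla_g\varphi|_g=1$, not an equation for the metric $c^{-2}g$. There is no parabolic factor $\partial_t-c^2\Delta_g$: the term $c^2\Delta_g$ is subprincipal and enters only through $\gamma=\tilde\alpha-c^2$ in the transport equation. A phase built from $c^{-2}g$ fails unless $c\equiv1$. This does not affect your appeal to Theorem \ref{unirecover}, but it would break your second-order step, which reuses the same construction.

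Finally, for $\tilde\alpha$ the attenuated transform is not handled by Fourier injectivity directly. The paper uses the fact that $\gamma\,e^{-\frac12\int_0^t\gamma}$ is an exact derivative along the ray. This reduces the identity to the vanishing of the unweighted light-ray transform of $\gamma$.
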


\begin{corollary}\label{cor1.3}
Under the same assumptions as in Theorem \ref{thme2}, if one of the following conditions is satisfied:
\begin{enumerate}
\item[(i)] $\alpha_1=\alpha_2$, \quad $\beta\neq 0$ in $\mathcal Q$,
\item[(ii)] $q_1=q_2$, \quad $\beta\neq 0$ in $\mathcal Q$,
\item[(iii)] $F_1=F_2$ in $\mathcal Q$,
\end{enumerate}
then we have
\[
\beta_1=\beta_2,\qquad q_1=q_2,\qquad \alpha_1=\alpha_2,\qquad F_1=F_2
\quad \text{in } \mathcal Q.
\]
\end{corollary}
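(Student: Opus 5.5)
We will derive Corollary~\ref{cor1.3} directly from Theorem~\ref{thme2}. By that theorem we already know $\beta_1=\beta_2=:\beta$. There is also a function $\psi\in E^{m+2}$ with vanishing Cauchy data at $t=0$ and at $t=T$, with $\psi|_\Gamma=0$ and $(\partial_\nu\partial_t\psi+c^2\partial_\nu\psi)|_\Gamma=0$, and it satisfies
\[
\alpha_2-\alpha_1=2\beta\,\partial_t\psi,\qquad q_2-q_1=2\partial_t(\beta\partial_t\psi),
\]
together with the stated formula for $F_2-F_1$. In each of the three cases the plan is to show $\psi\equiv0$ in $\mathcal Q$. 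Once this holds, all three gauge relations collapse to equalities and the corollary follows.

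In case (i), $\alpha_1=\alpha_2$ gives $\beta\,\partial_t\psi=0$. Since $\beta\neq0$ in $\mathcal Q$, we get $\partial_t\psi=0$ in $\mathcal Q$. Then $\psi(t,x)=\psi(0,x)=0$. In case (ii), $q_1=q_2$ gives $\partial_t(\beta\partial_t\psi)=0$, so $\beta\partial_t\psi$ is independent of $t$. It vanishes at $t=0$ because $\partial_t\psi(0,\cdot)=0$. Hence $\beta\partial_t\psi\equiv0$ by continuity in $t$ (recall $\beta,\psi\in E^m\subset C^0([0,T];C(M))$ since $m>n+1$). We then conclude as in case (i).

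Case (iii) is the main step. With $F_1=F_2$, the function $\psi$ solves the nonlinear problem
\[
\partial_t^3\psi+\alpha_1\partial_t^2\psi-\Delta_g\partial_t\psi-c^2\Delta_g\psi+q_1\partial_t\psi+\partial_t\bigl(\beta(\partial_t\psi)^2\bigr)=0\ \text{in }\mathcal Q,
\]
with $\psi|_\Gamma=0$ and zero initial data $\psi(0)=\partial_t\psi(0)=\partial_t^2\psi(0)=0$. We rewrite the nonlinear term as $2\beta\partial_t\psi\,\partial_t^2\psi+\partial_t\beta(\partial_t\psi)^2$ and treat it as linear in $\psi$ with coefficients depending on $\psi$. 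This shows that $\psi$ solves a linear MGT-type equation
\[
\partial_t^3\psi+\tilde\alpha\,\partial_t^2\psi-\Delta_g\partial_t\psi-c^2\Delta_g\psi+\tilde q\,\partial_t\psi=0,
\]
where $\tilde\alpha=\alpha_1+2\beta\partial_t\psi$ and $\tilde q=q_1+\partial_t\beta\,\partial_t\psi$ are bounded (again using $m>n+1$ and Sobolev embedding). This equation has homogeneous Dirichlet data and zero initial data. Uniqueness for it then gives $\psi\equiv0$. Uniqueness follows from the standard energy estimate for the linear MGT equation used in Section~\ref{well}: multiply by $\partial_t^2\psi+c^2\partial_t\psi$ (or use the energy for $z=\partial_t\psi+c^2\psi$), integrate over $(0,t)\times M$, and apply Gronwall with the $L^\infty$ bounds on $\tilde\alpha,\tilde q$.

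Alternatively, it suffices to note that $\psi$ and $0$ both solve \eqref{JMGTso} with coefficients $(\alpha_1,q_1,\beta,0)$ and zero data. Uniqueness in Theorem~\ref{prop1.1}, applied around the background solution $u_0=0$, would then give $\psi=0$ directly, provided $\psi$ lies in the small-data regime. Since smallness of $\psi$ is not guaranteed, I will rely on the linear energy argument above, which needs no smallness. The main technical point to check is that the coefficient $\tilde\alpha$ multiplying $\partial_t^2\psi$ is regular enough for the energy identity: $\partial_t\tilde\alpha$ must be bounded, which holds since $\psi\in E^{m+2}$. After $\psi=0$, the formulas of Theorem~\ref{thme2} give $\alpha_1=\alpha_2$ and $q_1=q_2$.
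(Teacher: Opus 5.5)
Your proposal is correct and follows the paper's proof. Cases (i) and (ii) are argued exactly as in the paper. In case (iii), the paper likewise shows $\psi\equiv 0$ by a Gr\"onwall energy argument from zero initial data, controlling the nonlinear term through $\|\partial_t\psi\|_{L^\infty}$ (available since $m>n+1$); this is equivalent to your absorption of $\partial_t(\beta(\partial_t\psi)^2)$ into the bounded coefficients $\tilde\alpha,\tilde q$.

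The only difference is the multiplier, which is cosmetic. The paper multiplies by $\partial_t^2\psi$ and adds $\frac{\mu}{2}\|\nabla_g\psi\|_{L^2(M)}^2$ to make the energy coercive, which carries out concretely the step you only sketch. With your multiplier $\partial_t^2\psi+c^2\partial_t\psi$ a similar lower-order correction would be needed, and no bound on $\partial_t\tilde\alpha$ is actually required.
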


\subsection{Related works and previous literature}

Inverse problems for nonlinear hyperbolic equations have been studied extensively since the pioneering work \cite{kurylev_inverse_2018}.  The paper \cite{kurylev_inverse_2018} observed that nonlinearity can be used as a powerful tool in inverse problems for nonlinear wave equations and showed that local measurements for the scalar wave equation with a quadratic nonlinearity determine the conformal class of a globally hyperbolic four-dimensional Lorentzian manifold. Their approach, now known as the higher order linearization method, has been used widely in \cite{feizmohammadi2020inverse,feizmohammadi2022recovery,lassas2021inverse,lassas_uniqueness_2022,lassas_stability_2025,uhlmann_inverse_2021}.  We refer the reader to \cite{hintz2022dirichlet,lassas2018inverse,uhlmann2022inverse,wang2019inverse} for further results. We  also refer the reader to \cite{oksanen_inverse_2024} for a unified approach treating general real principal type differential operators.
See \cite{harju2025x,Lassas2024,maddah2025numerical,uhlmann2020convolutional} for the computational aspects of the higher order linearization method.

For more physically relevant models, inverse problems arising in nonlinear ultrasound imaging have attracted considerable attention, including those for the Westervelt equation and the JMGT equation. The paper \cite{acosta_nonlinear_2022} proved that the Dirichlet-to-Neumann (DtN) map determines the nonlinearity in the Westervelt equation without damping terms.  For the related studies on inverse problems for the Westervelt equation, we refer the reader to \cite{acosta_simultaneous_2025,eptaminitakis2024weakly,li2024inverse,uhlmann2023}. Among these,  the two papers most closely  related to ours are as follows. The paper \cite{fu2023inverse} considered the unique recovery of a Westervelt-type nonlinearity from the all boundary measurement map in the JMGT equation. Our previous paper \cite{qiu2026inverse} addressed the unique recovery of Westervelt-type and Kuznetsov-type nonlinearity from the DtN map for the JMGT equation in the Riemannian manifold setting. Recent works on inverse problems for the JMGT and MGT equations include \cite{fu_partial_2026,fu_stability_2025,kaltenbacher_acoustic_2025,kaltenbacher_imaging_2025}.

Inverse source problems for nonlinear equations have recently become an active research direction.
The papers \cite{kian_determining_2024,liimatainen_uniqueness_2024} show that inverse source problems for nonlinear equations may exhibit gauge invariance and nonlinearity can help break this gauge.
Inverse source problems have also been studied in broader nonlinear settings in \cite{lassas2025gaussian,liimatainen2026inverse,liimatainen2025inverse2,liimatainen2025inverse,D2026}, including the semilinear wave equation, the fully nonlinear Monge--Amp\`ere equation, the quasilinear prescribed mean curvature equation and the generalized Calder\'on problem.

Compared with the existing results in \cite{fu2023inverse,qiu2026inverse}, 
which focus on the recovery of nonlinear coefficients, we study a simultaneous 
coefficient-and-source recovery problem for the JMGT equation.
In the simple Riemannian manifold setting, we show that the nonlinear coefficient $\beta$ is uniquely determined by the all boundary measurement map, whereas the lower-order coefficients $\alpha$ and $q$, together with the source term $F$, can only be recovered up to a natural gauge symmetry. Our analysis relies on a combination of  first order linearization and second order linearization, and the construction of geometric optics solutions to the linearized MGT equation. As a corollary, under additional assumptions, we further obtain the unique recovery of all unknown coefficients.

An intermediate step of independent interest is the unique recovery of the lower-order coefficients in the linearized MGT equation. 
Compared with existing results on inverse boundary value problems for linear MGT equations in Euclidean domains \cite{fu_partial_2026,fu_stability_2025}, we extend the uniqueness result to the setting of simple Riemannian manifolds.

The paper is organized as follows. In Section \ref{well}, we establish the local well-posedness of the boundary value problem \eqref{JMGTso} near the given solution. In Section \ref{go}, we present the construction of geometric optics solutions for the linearized MGT equation. In Section \ref{uniq}, we prove the unique recovery of two lower-order coefficients in the linearized MGT equation. In Section \ref{proo}, we  provide the proof of Theorem \ref{thme2} and the proof of Corollary \ref{cor1.3}. 

\section{Local Well-posedness of the JMGT equation}\label{well}

In this section, we establish the local well-posedness of the JMGT equation in a neighborhood of a given solution.
We begin with the following linear MGT equation:
\begin{equation}\label{MGTl}
\left\{
\begin{aligned}
&\partial_t^3v + \alpha(t,x) \partial_{t}^2 v  - \Delta_g \partial_t v-c^2(x)\Delta_g v+q(t,x) \partial_t v = F(t,x)
    && \text{in } \mathcal{Q}, \\
&v= f(t,x)
    && \text{on } \Gamma, \\
&v(0,x)=h^{(0)}(x), \partial_t v(0,x) = h^{(1)}(x), \partial_t^2 v(0,x)=h^{(2)}(x) 
    && \text{in }  M,
\end{aligned}
\right.
\end{equation}
where $\alpha,q, F \in E_{0,m-2}^{m}$ and $c\in C^{\infty}(M)$. Since $\alpha,q, F \in E_{0,m-2}^m$, all terms involving the boundary traces of $\partial_t^j\alpha$, $\partial_t^j q$ and $\partial_t^j F$, $0\le j\le m-2$, vanish in the compatibility conditions below.
We  introduce the compatibility conditions associated with \eqref{MGTl}. On the boundary $\partial M$, the quadruple $(f,h^{(0)},h^{(1)},h^{(2)})$ is said to satisfy  the MGT compatibility condition up to order $m+1$  if $(f,h^{(0)},h^{(1)},h^{(2)})\in O$ satisfies 
\begin{align} \label{compaMGT}
\left\{
\begin{aligned}
&f(0,x) = h^{(0)}(x),\\
&\partial_t f(0,x) = h^{(1)}(x),\\
&\partial_t^2 f(0,x) = h^{(2)}(x),\\
&\partial_t^3 f(0,x)  - \Delta_g h^{(1)}(x)-c^2(x) \Delta_g h^{(0)}(x) = 0,\\
&\partial_t^4f(0,x)-\Delta_g h^{(2)}(x)-c^2(x) \Delta_g h^{(1)}(x)=0,   \\
&\vdots\\
&\text{up to the temporal order } m+1 .
\end{aligned}
\right.
\end{align}
We also use \eqref{compaMGT} as the compatibility condition for the JMGT equation \eqref{JMGTso}. Indeed, since $\beta\in E_{0,m-1}^{m+1}$, the nonlinear term
$\partial_t(\beta(\partial_t u)^2)$ does not contribute to the compatibility conditions up to the required order on $\{t=0\}\times\partial M$.

We recall a well-posedness result for the  MGT equation without the damping term $q$.
\begin{lemma} \label{lemma2.1}
Suppose that $\alpha, c\in C^\infty(M), F\in E^m$, $(f,h^{(0)},h^{(1)},h^{(2)}) \in O$   
and the quadruple $(f,h^{(0)},h^{(1)},h^{(2)})$ satisfies the MGT compatibility condition:
\begin{align} \label{comp2.1}
\left\{
\begin{aligned}
&f(0,x) = h^{(0)}(x),\ \partial_t f(0,x) = h^{(1)}(x), \ \partial_t^2 f(0,x) = h^{(2)}(x)  &\text{on }\partial M\\
&\partial_t^3 f(0,x) + \alpha(x) h^{(2)}(x) - \Delta_g h^{(1)}(x) - c^2(x) \Delta_g h^{(0)}(x)= F(0,x) &\text{on }\partial M,\\
&\partial_t^4f(0,x)+\alpha(x) \partial_t^3f(0,x)-\Delta_g h^{(2)}(x)-c^2(x)\Delta_g h^{(1)}(x)=\partial_tF(0,x)&\text{on }\partial M,   \\
&\vdots \\
&\text{up to the temporal order } m+1.
\end{aligned}
\right.
\end{align}
Then the linear system 
\begin{equation}\label{MGT2.1}
\left\{
\begin{aligned}
&\partial_t^3v + \alpha(x) \partial_{t}^2 v  - \Delta_g \partial_t v-c^2(x)\Delta_g v = F(t,x)
    && \text{in } \mathcal{Q}, \\
&v= f(t,x)
    && \text{on } \Gamma, \\
&v(0,x)=h^{(0)}(x), \partial_t v(0,x) = h^{(1)}(x), \partial_t^2 v(0,x)=h^{(2)}(x) 
    && \text{in }  M,
\end{aligned}
\right.
\end{equation}
admits a unique solution $v \in E^{m+2}$ with $\partial_{\nu} v \in N_{m+1}$ such that 
\begin{align}\label{esm2.1}
 \left\|v \right\|_{E^{m+2}}+\left\| \partial_{\nu}v  \right\|_{N_{m+1}}\leq & Ce^{CT} \Big( \left\|h^{(0)}\right\|_{H^{m+2}(M)}+ \left\|h^{(1)}\right\|_{H^{m+1}(M)}+\left\|h^{(2)}\right\|_{H^{m}(M)} \nonumber\\
&+\sum_{k=0}^m \left\| \partial_t^k F \right\|_{L^2([0,T];H^{m-k}(M))}+\left\|f\right\|_{H^{m+2}(\Gamma)}   \Big).
\end{align}
\end{lemma}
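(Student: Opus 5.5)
The plan is to reduce the third-order MGT equation to a coupled system for a second-order wave equation and a first-order ODE in time, and then apply the standard hyperbolic mixed-problem regularity theory.

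Set $w:=\partial_t v+c^2 v$. Since $c$ is time-independent,
\[
\partial_t^2 w-\Delta_g w=\partial_t^3 v-\Delta_g\partial_t v+c^2\partial_t^2 v-c^2\Delta_g v .
\]
Hence \eqref{MGT2.1} becomes
\[
\partial_t^2 w-\Delta_g w+(\alpha-c^2)\,\partial_t^2 v=F ,
\]
and $\partial_t^2 v$ can be expressed through $w$ and $v$:
\[
\partial_t^2 v=\partial_t w-c^2\partial_t v=\partial_t w-c^2 w+c^4 v .
\]
This gives the coupled system
\[
\partial_t^2 w-\Delta_g w+(\alpha-c^2)\bigl(\partial_t w-c^2 w+c^4 v\bigr)=F,\qquad \partial_t v+c^2 v=w .
\]
The first equation is a damped wave equation in $w$ with a zeroth-order coupling to $v$. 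The second is an ODE in $t$ for $v$, so $v$ is given by a Duhamel integral of $w$. The boundary data transform to $w|_\Gamma=\partial_t f+c^2 f$, and the initial data to $w(0)=h^{(1)}+c^2h^{(0)}$ and $\partial_t w(0)=h^{(2)}+c^2h^{(1)}$. The MGT compatibility conditions \eqref{comp2.1} then become exactly the wave compatibility conditions for $w$ up to the needed order.

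For existence, I would treat $v\mapsto w\mapsto v$ as a fixed point. Given $v\in E^{m+2}$, I solve the wave equation for $w$ with the source $F-(\alpha-c^2)c^4v$, using the classical regularity for the Dirichlet wave problem. The boundary datum $\partial_t f+c^2f$ lies in $H^{m+1}(\Gamma)$, so by Lasiecka--Lions--Triggiani sharp regularity this gives $w\in E^{m+1}$ and $\partial_\nu w\in H^{m}$-type boundary spaces. I then recover $v$ from the ODE, $v(t)=e^{-c^2t}h^{(0)}+\int_0^t e^{-c^2(t-s)}w(s)\,ds$. A Gronwall argument shows the map is a contraction on short time intervals. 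Since the problem is linear, iterating over intervals gives the global $e^{CT}$ bound.

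For the regularity of $v$, I also need $\Delta_g v\in$ the correct space so that $v\in E^{m+2}$. This follows from $v=e^{-c^2t}h^{(0)}+\int e^{-c^2(t-s)}w$: since $w\in C([0,T];H^{m+1})$ and $h^{(0)}\in H^{m+2}$, one gains spatial regularity for the integral term by using the equation elliptically. Precisely, $-\Delta_g w=F-\partial_t^2 w-\cdots$ together with $\partial_t v=w-c^2v$ yields $\partial_t^k v\in H^{m+2-k}$, and $\partial_\nu v\in N_{m+1}$ follows similarly from $\partial_\nu w$ through the ODE. Uniqueness follows from the energy estimate applied to the difference of two solutions.

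The main obstacle is getting the sharp top-order regularity: $v\in C([0,T];H^{m+2})$ when $w$ is only in $C([0,T];H^{m+1})$. The ODE alone does not gain a derivative, since $c$ depends on $x$ and the commutators are harmless but offer no smoothing. I would first try differentiating the system in time $m$ times, using compatibility to control the initial traces. I would then obtain the top spatial regularity from the elliptic equation $c^2\Delta_g v=-\Delta_g\partial_t v+\partial_t^3v+\alpha\partial_t^2 v-F$, inductively in the number of time derivatives, with Dirichlet elliptic estimates using $v|_\Gamma=f\in H^{m+2}(\Gamma)$. If this fails, the fallback is to cite the known MGT well-posedness (Kaltenbacher--Lasiecka--Marchand, or the lemma from \cite{qiu2026inverse}) and only verify that the compatibility conditions match.
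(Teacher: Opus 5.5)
\textbf{There is a genuine gap.} Your route (reduce MGT to a wave equation plus an ODE) differs from the paper's, but the reduction as written contains an error, and the top-order regularity step is not closed.

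\textbf{The reduction.} That $c$ is time-independent only lets $c^2$ commute with $\partial_t$. Since $c=c(x)$, you have $\Delta_g(c^2v)=c^2\Delta_g v+2\langle\nabla_g c^2,\nabla_g v\rangle_g+(\Delta_g c^2)v$, so the correct equation for $w=\partial_t v+c^2v$ is
\[
\partial_t^2 w-\Delta_g w+(\alpha-c^2)\,\partial_t^2 v=F-2\langle\nabla_g c^2,\nabla_g v\rangle_g-(\Delta_g c^2)\,v .
\]
The coupling to $v$ is therefore first order, not zeroth order. Your claim that the MGT compatibility conditions become exactly the wave compatibility conditions for $w$ fails for your system: at the level of $\partial_t^2w(0)$ they differ by $[\Delta_g,c^2]h^{(0)}$ on $\partial M$. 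The claim does hold for the corrected system. This part is repairable: in your fixed-point map $v\in E^{m+2}$ is given, so the commutator is an admissible source for the $E^{m+1}$ wave theory and still yields a factor $T$ in the contraction estimate.

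\textbf{The top-order regularity.} The real gap is the step you flag yourself, $v\in C([0,T];H^{m+2}(M))$. You propose reading $c^2\Delta_g v=-\Delta_g\partial_t v+\partial_t^3 v+\alpha\partial_t^2 v-F$ as an elliptic equation at fixed $t$. This is circular: the right-hand side needs $\partial_t v\in H^{m+2}$, one derivative more than $E^{m+2}$ provides. Induction on the number of time derivatives only reaches $\partial_t^k v$ with $k\ge1$ (via $\partial_t v=w-c^2v$), never the base case $k=0$.

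The missing idea is to read the same relation as a first-order ODE in $t$ for $Z:=\Delta_g v$:
\[
\partial_t Z+c^2Z=\partial_t^3v+\alpha\partial_t^2v-F .
\]
Duhamel's formula plus one integration by parts in time trades $\partial_t^3v$ for $\partial_t^2v\in C([0,T];H^m(M))$ and the data $\Delta_g h^{(0)}$, $h^{(2)}$. This gives $Z\in C([0,T];H^m(M))$ with the right bound. Dirichlet elliptic regularity with $v(t)|_{\partial M}=f(t)\in H^{m+3/2}(\partial M)$ then gives $v(t)\in H^{m+2}(M)$.

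Inside your iteration the recovered $v$ does not yet solve the MGT equation. There you do the same computation on $\Delta_g\int_0^t e^{-c^2(t-s)}w(s)\,ds$, substituting the wave equation for $\Delta_g w$ and integrating by parts in $s$. Your remark about using the equation on the integral term points this way, but the integration by parts in time is the decisive step.

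\textbf{Comparison with the paper.} The paper does not reduce to a wave equation at all. It takes existence, uniqueness and the $E^{m+2}$, $N_{m+1}$ regularity from \cite[Lemma 2.2]{qiu2026inverse}, which is your fallback. It then only proves the estimate, via an $m=0$ energy inequality for the third-order equation, Gronwall, and the same bound applied to $\partial_t^k v$.
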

\begin{proof}
By \cite[Lemma 2.2]{qiu2026inverse}, for data $(f,h^{(0)},h^{(1)},h^{(2)})$ satisfying the compatibility conditions \eqref{comp2.1}, equation \eqref{MGT2.1} admits a unique solution $v \in E^{m+2}$ with $\partial_{\nu} v \in N_{m+1}$.
Moreover, for the case $m=0$, we have
\begin{align}
&\int_M \bigl(v_{tt}^2 + |\nabla_g v_t|_g^2 + v_t^2 + |\nabla_g v|_g^2 + |\Delta_g v|^2\bigr)\mathrm{d}V_g \nonumber \\
& \leq C\Bigl( \|h^{(0)}\|_{H^2(M)}^2 + \|h^{(1)}\|_{H^1(M)}^2 + \|h^{(2)}\|_{L^2(M)}^2 \Bigr)  \nonumber \\
&+ C \int_0^t \int_M \bigl(v_{tt}^2 +|\nabla_g v_t|_g^2 +  v_t^2 + |\nabla_g v|_g^2 + |\Delta_g v|^2\bigr)\mathrm{d}V_g\mathrm{d}l \nonumber \\
& + C\Bigl( \|F\|_{L^2(\mathcal{Q})}^2 + \|f\|_{H^2([0,T];L^2(\partial M))}^2 + \|f\|_{H^1([0,T];H^1(\partial M))}^2 \Bigr).\label{esm2}
\end{align}
Applying  Gronwall's inequality and combining \eqref{esm2} and the boundary regularity, we obtain
\begin{equation} \label{energy}
\begin{aligned}
\|v\|_{E^2}^2 + \|\partial_{\nu}v\|_{H^1([0,T];L^2(\partial M))}^2
&\le Ce^{CT}\Bigl(
\|h^{(0)}\|_{H^2(M)}^2 + \|h^{(1)}\|_{H^1(M)}^2 + \|h^{(2)}\|_{L^2(M)}^2 \\
&\qquad\quad
+ \|F\|_{L^2(\mathcal{Q})}^2 + \|f\|_{H^2(\Gamma)}^2
\Bigr).
\end{aligned}
\end{equation}
To obtain the higher-order estimate, we adopt the bootstrap argument. For any positive integer $1\le k\le m$, set $\hat{v}= \partial_t^k v$, which satisfies the following equation:
\begin{equation*}
\left\{
\begin{aligned}
&\partial_t^3\hat{v} + \alpha(x) \partial_{t}^2 \hat{v}  - \Delta_g \partial_t \hat{v}-c^2(x)\Delta_g \hat{v} = \partial_t^kF
    && \text{in } \mathcal{Q}, \\
&\hat{v}= \partial_t^kf
    && \text{on } \Gamma. \\
\end{aligned}
\right.
\end{equation*}
Since $F \in E^m, (f,h^{(0)},h^{(1)},h^{(2)}) \in O$, we have that
\begin{align*}
    &\partial_t^kF \in L^2([0,T];H^{m-k}(M)),\partial_t^kf \in A_{m-k},\\
    &\left(\hat{v}(0,\cdot),\partial_t\hat{v}(0,\cdot),\partial_t^2\hat{v}(0,\cdot)\right) \in H^{m+2-k}(M) \times H^{m+1-k}(M) \times H^{m-k}(M).
\end{align*}
The compatibility conditions of order $m+1-k$ also hold. Combining the estimates for all 
$k$, we can prove that estimate \eqref{esm2.1} holds.
\end{proof}

\begin{lemma} \label{lemma2.2}
Suppose that $c\in C^\infty(M)$, $\alpha, q,F\in E_{0,m-2}^{m}$, $(f,h^{(0)},h^{(1)},h^{(2)}) \in R$. Then the linear system \eqref{MGTl} admits a unique solution $v \in E^{m+2}$ with $\partial_{\nu} v \in N_{m+1}$ such that 
\begin{align*}
& \left\|v \right\|_{E^{m+2}}+\left\| \partial_{\nu}v  \right\|_{N_{m+1}}\leq \\ &C  
\left( \left\|h^{(0)}\right\|_{H^{m+2}(M)}+ \left\|h^{(1)}\right\|_{H^{m+1}(M)}+\left\|h^{(2)}\right\|_{H^{m}(M)}+\left\|F\right\|_{E^m}+\left\|f\right\|_{H^{m+2}(\Gamma)}   \right).
\end{align*}
\end{lemma}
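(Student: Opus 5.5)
We will reduce Lemma \ref{lemma2.2} to Lemma \ref{lemma2.1} by a fixed point argument, moving the time-dependent lower-order terms to the right-hand side. Concretely, given $w\in E^{m+2}$, we define
\[
\mathcal T w := v,
\]
where $v$ solves \eqref{MGT2.1} with $\alpha\equiv 0$ and source
\[
G(w):=F-\alpha\,\partial_t^2 w-q\,\partial_t w,
\]
with the same boundary and initial data $(f,h^{(0)},h^{(1)},h^{(2)})$. We drop $\alpha$ entirely from the operator, since Lemma \ref{lemma2.1} only allows time-independent $\alpha$.

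The first step is to check that Lemma \ref{lemma2.1} applies. Because $w\in E^{m+2}$, we have $\partial_t^2 w,\partial_t w\in E^m$. Since $E^m$ is an algebra for $m>n+1$ and $\alpha,q\in E^m$, this gives $G(w)\in E^m$ with
\[
\|G(w)\|_{E^m}\le \|F\|_{E^m}+C_m\bigl(\|\alpha\|_{E^m}+\|q\|_{E^m}\bigr)\|w\|_{E^{m+2}} .
\]
For the compatibility condition \eqref{comp2.1} with $\alpha=0$ and source $G(w)$, we need $\partial_t^jG(w)(0,\cdot)=0$ on $\partial M$ for $0\le j\le m-2$. This holds because $\alpha,q,F\in E^m_{0,m-2}$: by Leibniz, every term of $\partial_t^j(\alpha\partial_t^2w)$ at $t=0$ contains some $\partial_t^i\alpha$ with $i\le j\le m-2$, which vanishes on $\{t=0\}\times\partial M$, and likewise for the $q$ and $F$ terms. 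The conditions then reduce exactly to \eqref{compaMGT}, which holds since the data lie in $R$.

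There is one subtlety: \eqref{comp2.1} runs up to temporal order $m+1$, i.e.\ up to $\partial_t^{m-2}$ of the source, so the vanishing order $m-2$ matches precisely. I will verify this index bookkeeping carefully; I expect it to be the main fiddly point. A related issue is that the compatibility data at $t=0$ must be those of $v$, not $w$. This is why I restrict $\mathcal T$ to the closed affine subset
\[
X=\bigl\{w\in E^{m+2}:\ \partial_t^k w(0,\cdot)=\text{the values determined by the data and the equation},\ k\le m+2\bigr\},
\]
and $\mathcal T$ preserves $X$.

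The second step is contraction. For $w_1,w_2\in X$, the difference $\mathcal Tw_1-\mathcal Tw_2$ solves the problem with zero data and source $-\alpha\partial_t^2(w_1-w_2)-q\partial_t(w_1-w_2)$. Estimate \eqref{esm2.1} gives a constant times $\|w_1-w_2\|$, but that constant need not be small. I will handle this in one of two ways:
\begin{itemize}
\item[(a)] work on a short interval $[0,\tau]$, where the $L^2_t$ norms of the source contribute a factor $\tau^{1/2}$, then iterate over finitely many subintervals, restarting with the data at $t=\tau$ (compatibility is inherited from the solution itself);
\item[(b)] use the weighted norm $\sup_t e^{-\lambda t}\sum_k\|\partial_t^k w(t)\|$ with $\lambda$ large, applying the Gronwall-type estimate \eqref{esm2} before integration to obtain a factor $C/\lambda$.
\end{itemize}
I would try (b) first, because \eqref{esm2.1} is derived from an integrated-in-time energy inequality. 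Either route yields a unique fixed point $v\in E^{m+2}$ with $\partial_\nu v\in N_{m+1}$, and this $v$ solves \eqref{MGTl}. Uniqueness follows by applying the same contraction estimate to the difference of two solutions.

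Finally, for the stated estimate, apply \eqref{esm2.1} to $v=\mathcal Tv$:
\[
\|v\|_{E^{m+2}}+\|\partial_\nu v\|_{N_{m+1}}\le C\bigl(\text{data}+\|F\|_{E^m}\bigr)+C\int_0^t\bigl(\text{energy of } v\bigr).
\]
Here $\sum_k\|\partial_t^kF\|_{L^2H^{m-k}}\le C\sqrt T\,\|F\|_{E^m}$. Gronwall's inequality then absorbs the $v$-dependent term, leaving a constant depending on $T$ and on $\|\alpha\|_{E^m},\|q\|_{E^m}$.
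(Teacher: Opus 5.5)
Your proposal is correct and follows essentially the same route as the paper: you freeze $\alpha\,\partial_t^2 w+q\,\partial_t w$ into the source, apply Lemma~\ref{lemma2.1} with the damping term removed, get a contraction from the $\sqrt{T}$ factor on a short interval and then continue (your option (a) is exactly the paper's argument), and absorb the $v$-dependent terms to obtain the estimate. Your restriction of the map to the affine set $X$ of functions with the prescribed initial time derivatives improves on the paper, which asserts compatibility for arbitrary $w\in E^{m+2}$; yet from temporal order five on, the conditions involve $\Delta_g$ of the source at $t=0$ restricted to $\partial M$ (e.g.\ $\Delta_g(\alpha(0,\cdot)\,\partial_t^2 w(0,\cdot))$), and this need not vanish merely because $\partial_t^j\alpha=0$ on $\{t=0\}\times\partial M$.
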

\begin{proof}
For any $w \in  E^{m+2}$, consider the following equation:
\begin{equation}\label{MGT2.2}
\left\{
\begin{aligned}
&\partial_t^3v - \Delta_g \partial_t v-c^2(x)\Delta_g v = F(t,x)-\alpha(t,x) \partial_{t}^2 w-q(t,x) \partial_t w
    && \text{in } \mathcal{Q}, \\
&v= f(t,x)
    && \text{on } \Gamma, \\
&v(0,x)=h^{(0)}(x), \partial_t v(0,x) = h^{(1)}(x), \partial_t^2 v(0,x)=h^{(2)}(x) 
    && \text{in }  M.
\end{aligned}
\right.
\end{equation}
By the assumptions, one can show that $\tilde{F}=F-\alpha \partial_{t}^2 w-q \partial_t w \in E^m$ and $\partial_t^k\tilde{F}=\partial_t^kF$ on $\{t=0\} \times \partial M$ for $k=0,1,\cdots,m-2$. Then we can verify the compatibility conditions \eqref{compaMGT} hold.\\
By Lemma \ref{lemma2.1}, \eqref{MGT2.2} admits a unique solution $v\in E^{m+2}$ and $\partial_{\nu}v \in N_{m+1}$. Moreover,
\begin{align}\label{eq:v-es}
 \left\|v \right\|_{E^{m+2}}+\left\| \partial_{\nu}v  \right\|_{N_{m+1}}\leq & C \Big( \left\|h^{(0)}\right\|_{H^{m+2}(M)}+ \left\|h^{(1)}\right\|_{H^{m+1}(M)}+\left\|h^{(2)}\right\|_{H^{m}(M)}  \nonumber\\ & +\sum_{k=0}^m \left\| \partial_t^k \tilde{F} \right\|_{L^2([0,T];H^{m-k}(M))}+\left\|f\right\|_{H^{m+2}(\Gamma)}   \Big).
\end{align}
Define the mapping
\begin{equation*}
    \mathcal{L}:E^{m+2} \to E^{m+2}, \quad \mathcal{L}(w)=v,
\end{equation*}
where $v$ is the solution to \eqref{MGT2.2} associated with $w\in E^{m+2}$. For any $w_1,w_2 \in E^{m+2}$, denote by $v_1$ and $v_2$ the associated solutions to \eqref{MGT2.2}. By \eqref{esm2.1}, we obtain that
\begin{align*}
&\| v_1-v_2 \|_{E^{m+2}}+ \| \partial_{\nu}(v_1-v_2) \|_{N_{m+1}}  \\ 
&\leq C e^{CT}\sum_{k=0}^m \left\| \partial_t^k \tilde{F}_1- \partial_t^k \tilde{F}_2\right\|_{L^2([0,T];H^{m-k}(M))} \leq C \sqrt{T}e^{CT}\sum_{k=0}^m \left\| \partial_t^k \tilde{F}_1- \partial_t^k \tilde{F}_2\right\|_{C([0,T];H^{m-k}(M))}\\
& \leq C \sqrt{T}e^{CT}\left\|\alpha \partial_{t}^2 (w_1-w_2)+q \partial_t (w_1-w_2)\right\|_{E^m}\\
&\leq C \sqrt{T}e^{CT}\left(\|\alpha\|_{E^m}+\|q\|_{E^m}\right)\|w_1-w_2\|_{E^{m+2}}.
\end{align*}
If $T$ is sufficiently small so that $C \sqrt{T}e^{CT}\left(\|\alpha\|_{E^m}+\|q\|_{E^m}\right) <1$, then by the Banach fixed point theorem, $\mathcal{L}$ has a unique fixed point $v \in E^{m+2}$.\\
Using the same scaling argument as before, we obtain that
\begin{align*}
    \sum_{k=0}^m \left\| \partial_t^k \tilde{F} \right\|_{L^2([0,T];H^{m-k}(M))} \le  C\|F\|_{E^m}+C\sqrt{T}\left(\|\alpha\|_{E^m}+\|q\|_{E^m}\right)\|v\|_{E^{m+2}}.
\end{align*}
Choosing $T$ small enough such that $C \sqrt{T}\left(\|\alpha\|_{E^m}+\|q\|_{E^m}\right) < \frac{1}{2}$, we can absorb the $v$-dependent terms of \eqref{eq:v-es} from the right-hand side to the left-hand side to obtain the desired estimate.\\
Since \eqref{MGTl} is a linear equation, a standard continuation argument allows us to extend this local solution step-by-step to any $T>0$. 
\end{proof}

 We now turn to the proof of Theorem \ref{prop1.1}.
\begin{proof}
For $w \in B_{\rho}:=\left\{u \in E^{m+2}: \|u\|_{E^{m+2}} \leq \rho\right\}$, let $\tilde u$ be the solution of 
\begin{equation}\label{fixpoint}
\left\{
\begin{aligned}
&\partial_t^3 \tilde u + (\alpha-2\beta \partial_tu_0) \partial_{t}^2 \tilde u  - \Delta_g \partial_t \tilde u-c^2\Delta_g \tilde u +\left(q-2\partial_t(\beta\partial_tu_0)\right)\partial_t \tilde{u} = \partial_t\left(\beta(\partial_t w)^2\right)
    && \text{in } \mathcal{Q}, \\
&\tilde u= f-f_0
    && \text{on } \Gamma, \\
&\tilde u(0,x)=h^{(0)}-h^{(0)}_0, \ \partial_t \tilde u(0,x) = h^{(1)}-h^{(1)}_0, \  \partial_t^2 \tilde u(0,x)=h^{(2)}-h^{(2)}_0
    && \text{in }  M,
\end{aligned}
\right.
\end{equation}
where $\alpha-2\beta \partial_tu_0,q-2\partial_t(\beta\partial_tu_0) \in E_{0,m-2}^m$.
Since $E^m$ is an algebra and $\beta \in E^{m+1}$, one can show that 
\begin{align*}
&L(t,x):=\partial_t\left(\beta(\partial_t w)^2\right)  \in E^m, \\ 
\partial_t^k L=0, \ \
 \partial_t^k(\alpha-2\beta \partial_tu_0)=0, \
 &\text{ and } \partial_t^k(q-2\partial_t(\beta\partial_tu_0))=0 \ \ \ \text{ on } \{t=0\}\times\partial M,
\end{align*}
for $k=0,1,\cdots,m-2$. Then we can verify that  $(f-f_0,h^{(0)}-h^{(0)}_0,h^{(1)}-h^{(1)}_0,h^{(2)}-h^{(2)}_0)$ satisfies the MGT compatibility conditions \eqref{compaMGT} for the MGT equation \eqref{fixpoint}.
Moreover, from Lemma \ref{lemma2.2}, we have 
\begin{align}
&\| \tilde u \|_{E^{m+2}}+ \| \partial_{\nu}\tilde u \|_{N_{m+1}} 
\leq C ( \| \partial_t\left(\beta(\partial_t w)^2\right) \|_{E^m} \nonumber\\
&+\|f-f_0\|_{H^{m+2}(\Gamma)}+\|h^{(0)}-h^{(0)}_0\|_{H^{m+2}(M)}+\|h^{(1)}-h^{(1)}_0\|_{H^{m+1}(M)}+\|h^{(2)}-h^{(2)}_0\|_{H^m(M)})\nonumber  \\
&\leq C (\|\beta(\partial_t w)^2  \|_{E^{m+1}} +\varepsilon) \leq C (\| w\|_{E^{m+2}}^2+\varepsilon) \leq C (\rho^2+\varepsilon). \label{estimate2}
\end{align} 
For $\rho,\varepsilon$ sufficiently small, we have $\tilde u \in B_{\rho}$. Then we define a map
\begin{equation*}
\mathcal{G}: B_{\rho}\to B_{\rho}, \quad  \mathcal{G}(w)=\tilde u.
\end{equation*}
Next let $w_j \in B_{\rho}$ for $j=1,2$, and $\tilde u_j$ be the solution to \eqref{fixpoint} with $w$ replaced by $w_j$. Then the difference $V:=\tilde u_1 -\tilde u_2$ satisfies 
\begin{equation*}
\left\{
\begin{aligned}
&\partial_t^3 V +  (\alpha-2\beta \partial_tu_0) \partial_{t}^2 V  - \Delta_g \partial_t V-c^2\Delta_g V +\left(q-2\partial_t(\beta\partial_tu_0)\right)\partial_tV\\
&= \partial_t\bigl(\beta \partial_t(w_1-w_2)\partial_t(w_1+w_2)\bigr)  
    && \text{in } \mathcal{Q}, \\
&V= 0
    && \text{on } \Gamma, \\
&V(0,x)=0, \partial_t V(0,x) = 0, \partial_t^2 V(0,x)=0 
    && \text{in }  M.
\end{aligned}
\right.
\end{equation*}
Therefore, by using Lemma \ref{lemma2.2} we obtain 
\begin{align*}
&\| V \|_{E^{m+2}}+ \| \partial_{\nu}V \|_{N_{m+1}}  \\ 
&\leq C\| \partial_t\bigl(\beta \partial_t(w_1-w_2)\partial_t(w_1+w_2)\bigr)  \|_{E^m} \leq C \| \beta \partial_t(w_1-w_2)\partial_t(w_1+w_2) \|_{E^{m+1}} \\
& \leq C \| w_1+w_2\|_{E^{m+2}}\| w_1-w_2  \|_{E^{m+2}} \leq C \rho \|w_1-w_2\|_{E^{m+2}}.
\end{align*}
For $\rho$ sufficiently small, it implies that $\mathcal{G}$ is a contraction on $B_{\rho}$. By the Banach fixed-point theorem, there exists a unique fixed point $\bar u$ which solves \eqref{fixpoint} with $w=\bar u$. Therefore $u:=\bar u+u_0$ is the solution to \eqref{JMGTso}, and by \eqref{estimate2}, we obtain that the estimate \eqref{wellestimate}  holds.
\end{proof}

\begin{remark}
Moreover, the solution map $(f,h^{(0)},h^{(1)},h^{(2)})\,\mapsto\,u$ is $C^\infty$ Fr\'echet differentiable.
\end{remark}

\section{Geometric optics}\label{go}

In this section, we construct  geometric optics solutions  approximating  exact solutions of the linearized MGT equation 
\begin{equation}\label{linearMGT}
Pv:=\partial_t^3v + \alpha(t,x) \partial_{t}^2 v  - \Delta_g \partial_t v-c^2(x)\Delta_g v+q(t,x) \partial_t v =0
    \quad \text{in } \mathcal{Q}.
\end{equation}
\subsection{Construction of the phase and amplitude functions }
We construct geometric optics solutions of the following  WKB form
\begin{equation*}
v_{\rho}=\mathrm{e}^{\mathrm{i} \rho(t+\varphi(x))}a(t,x,\rho)= \mathrm{e}^{\mathrm{i} \rho(t+\varphi(x))}(a_0(t,x)+\rho^{-1}a_{1}(t,x)),
\end{equation*}
where $t+\varphi$ is the phase function and $a$ is the amplitude function. 
Substituting this ansatz into \eqref{linearMGT}, we have
\begin{align}\label{WKBform}
Pv_{\rho}=  \mathrm{e}^{\mathrm i \rho (t+\varphi(x))} \left( -\mathrm{i}\rho^3 T_3 a - \rho^2T_2 a+i\rho T_1 a+Pa \right),
\end{align}
where the operators $T_3,T_2$ and $T_1$ are defined by
\begin{align*}
T_3 a&=\bigl(1-|\nabla_g\varphi|_g^2\bigr)a,
\\
T_2 a&=\bigl(3-|\nabla_g\varphi|_g^2\bigr)\partial_t a
-2\langle \nabla_g\varphi,\nabla_g a\rangle_g
-\bigl(\Delta_g\varphi+c^2|\nabla_g \varphi|_g^2-\alpha\bigr)a,
\\
T_1 a&=3\partial_t^2 a
+\bigl(2\alpha-\Delta_g\varphi\bigr)\partial_t a
-2\langle \nabla_g\varphi,\nabla_g \partial_t a\rangle_g
-\Delta_g a \\
&-c^2(\Delta_g\varphi)a
-2c^2\langle \nabla_g\varphi,\nabla_g a\rangle_g
+qa.
\end{align*}
To make the leading order term in \eqref{WKBform} vanish, we impose the following eikonal equation and transport equations
\begin{align}
&|\nabla_g \varphi|_g^2=1, \label{eikonal4}  \\
 &       T_2 a_0=0          ,   \label{trans1} \\
 &T_2 a_1= \mathrm{i} T_1 a_0.           \label{trans2}
\end{align}
Similar to the construction in \cite[Section 2.1]{kian_recovery_2019} and \cite[Section 4.1]{fu2023inverse}, we now proceed to construct the phase function and the amplitude function.
Since $(M,g)$ is assumed to be simple, the eikonal equation can be solved globally in $M$. We first extend the simple manifold $(M,g)$ into a larger simple manifold $(\widetilde M,g)$ such that $M$ is contained in  the interior of $\widetilde M$. Now we choose $y \in \partial \widetilde M$ and consider the polar normal coordinates  $(r,\theta)$ on $\widetilde M$ given by
\begin{equation*}
x=\mathrm{exp}_{y}(r\theta),
\end{equation*}
where $r>0$ and $\theta \in S_{y}(\widetilde M):=\{     v \in T_y \widetilde M: |v|_g=1 \}$. According to the Gauss lemma, in these coordinates, the metric has the form $g(r,\theta)=d r^2+g_0(r,\theta)$ with $g_0(r,\theta)$ a metric on $S_y(\widetilde M)$ that depends smoothly on $r$. We choose 
\begin{equation}\label{eki}
\varphi(x)=\operatorname{dist}(y,x),     \quad x\in M,
\end{equation}
where $\operatorname{dist}$ denotes the Riemannian distance function on $(\widetilde M,g)$. Then $\varphi$ is given by $r$ in the polar normal coordinates. Since $g=dr^2+g_0(r,\theta)$, we have $\nabla_g \varphi=\partial_r$ and hence  
\begin{equation}
|\nabla_g \varphi|_g^2=g(\partial_r,\partial_r)=1.
\end{equation}
After choosing the phase function, we now construct the amplitude functions. We write $a(t,r,\theta)=a(t,\mathrm{exp}_{y} (r\theta))$ and use this notation to indicate the representation in the polar normal coordinates for other functions.  We define $b(r,\theta)=\det g_0(r,\theta)$ and $\gamma(t,r,\theta)=\alpha(t,r,\theta)-c^2(r,\theta)$. Then we transfer \eqref{trans1} and \eqref{trans2} to 
\begin{align}
&\partial_t a_0-\partial_ra_0+\left(\frac{\gamma}{2}-\frac{\partial_r b}{4b}\right)a_0=0, \label{transp1}  \\
&\partial_t a_1-\partial_r a_1+\left(\frac{\gamma}{2}-\frac{\partial_r b}{4b}\right)a_1=\frac{\mathrm{i}}{2} T_1 a_0. \label{transp2}
\end{align}
We see that for $\chi\in  C^{\infty}(\mathbb R)$ and $h\in C^{\infty}(S_y \widetilde M)$, the function 
\begin{equation}\label{pamplitude}
a_0(t,r,\theta)=\chi(r+t) h(\theta) b(r,\theta)^{-\frac{1}{4}}\mathrm{e}^{-\frac{1}{2}\int_0^t \gamma(\tau,r+t-\tau,\theta) d\tau    }
\end{equation}
is the solution of the \eqref{transp1}. Given the initial data, $a_1$  can be obtained by solving the first order ODE \eqref{transp2}. We omit the details.
\subsection{Estimate of the remainder term}
We now proceed with the construction and estimate of the remainder term.
We define $F_{\rho}=\mathrm{e}^{\mathrm{i} \rho \varphi(x)} (Pa_0+i T_1a_1+\rho^{-1} Pa_1)$, and the remainder term $R_{\rho}$ must satisfy
\begin{equation}\label{remain}
\left\{
\begin{aligned}
&\partial_t^3R_{\rho} + \alpha \partial_{t}^2 R_{\rho}  - \Delta_g \partial_t R_{\rho}-c^2\Delta_g R_{\rho}+q \partial_t R_{\rho} = -\mathrm{e}^{\mathrm{i} \rho t} F_{\rho}
    && \text{in } \mathcal{Q}, \\
&R_{\rho}= 0
    && \text{on } \Gamma, \\
&R_{\rho}(0,\cdot)=  \partial_t R_{\rho}(0,\cdot) =  \partial_t^2 R_{\rho}(0,\cdot)=0 
    && \text{in } M.
\end{aligned}
\right.
\end{equation}
We now state  the remainder estimate.
\begin{lemma}\label{WKB}
Choose $\varphi,a_0$ and $a_1$ by equations \eqref{eki}, \eqref{pamplitude}  and \eqref{transp2} respectively. Then there exists a solution $v\in E^2$ to $Pv=0$ of the form 
\begin{equation}
v=\mathrm{e}^{\mathrm{i} \rho(t+\varphi(x))}(a_0+\rho^{-1}a_1)+R_{\rho},
\end{equation}
where $R_{\rho}$ satisfies \eqref{remain}. Moreover, there exist  positive constants $C$ and $C_0$ such that 
\begin{align}
&\rho\left( \| R_{\rho} \|_{L^2(\mathcal Q)} + \|\partial_t R_{\rho} \|_{L^2(\mathcal Q)}+ \|\nabla_g R_{\rho} \|_{L^2(\mathcal Q)} \right)+\| \partial_t^2 R_{\rho} \|_{L^2(\mathcal Q)}+\| \nabla_g \partial_tR_{\rho} \|_{L^2(\mathcal Q)} \nonumber \\
&\leq C \| F_{\rho} \|_{H^1([0,T];L^2(M))} \leq C_0.\label{remainesti-new}
\end{align}
\end{lemma}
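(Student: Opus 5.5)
Since $a_0$ and $a_1$ solve the eikonal equation \eqref{eikonal4} and the transport equations \eqref{trans1}--\eqref{trans2}, the identity \eqref{WKBform} applied to $a=a_0+\rho^{-1}a_1$ gives
\[
Pv_\rho=\mathrm e^{\mathrm i\rho(t+\varphi)}\bigl(\mathrm i\rho T_1a_0+Pa_0 - \rho T_2 a_1 + \mathrm i T_1 a_1+\rho^{-1}Pa_1\bigr).
\]
The $\rho^3$ term vanishes by \eqref{eikonal4}, and the $\rho^2$ term vanishes by \eqref{trans1}. The $\rho^1$ term cancels because of \eqref{trans2}. Hence $Pv_\rho=\mathrm e^{\mathrm i\rho t}F_\rho$. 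So $v=v_\rho+R_\rho$ solves $Pv=0$ exactly when $R_\rho$ solves \eqref{remain}.

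\emph{Existence and regularity of $R_\rho$.} The problem \eqref{remain} has zero boundary and initial data, so the compatibility conditions hold trivially at the lowest order. Lemma \ref{lemma2.2} (or the $m=0$ energy estimate \eqref{energy}, combined with the fixed point argument of Lemma \ref{lemma2.2}) then yields a unique $R_\rho\in E^2$. The ingredients $\varphi$, $a_0$, $a_1$, $\alpha$, $q$ are smooth enough in $\mathcal Q$, so $F_\rho\in H^1([0,T];L^2(M))$ with a norm bounded uniformly in $\rho\ge1$. Indeed, $F_\rho$ contains only the factor $\mathrm e^{\mathrm i\rho\varphi}$, which does not depend on $t$, so time derivatives produce no powers of $\rho$. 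This gives the last inequality $\le C_0$.

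\emph{The $\rho$-weighted estimate.} This is the main step. The unweighted energy estimate \eqref{energy} gives $\|R_\rho\|_{E^2}\le C\|\mathrm e^{\mathrm i\rho t}F_\rho\|_{L^2}\le C$, which accounts for the $\partial_t^2R_\rho$ and $\nabla_g\partial_tR_\rho$ terms. To gain the factor $\rho$ on the lower order quantities, I would exploit the oscillation in time. Write $\mathrm e^{\mathrm i\rho t}F_\rho=\partial_t\bigl((\mathrm i\rho)^{-1}\mathrm e^{\mathrm i\rho t}F_\rho\bigr)-(\mathrm i\rho)^{-1}\mathrm e^{\mathrm i\rho t}\partial_tF_\rho$. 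Then introduce $W(t)=\int_0^tR_\rho(s)\,ds$. Integrating \eqref{remain} in time, $W$ satisfies an MGT-type equation with zero initial and boundary data, source
\[
G=(\mathrm i\rho)^{-1}\bigl(\mathrm e^{\mathrm i\rho t}F_\rho-F_\rho(0)\bigr)-(\mathrm i\rho)^{-1}\int_0^t \mathrm e^{\mathrm i\rho s}\partial_tF_\rho\,ds,
\]
plus commutator terms coming from the time dependence of $\alpha$ and $q$. These commutator terms are $\int_0^t(\partial_s\alpha\,\partial_s R_\rho+\partial_s q\,R_\rho)\,ds$ after integration by parts, and they are controlled by lower order norms of $R_\rho$ and $W$.

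Applying the energy estimate \eqref{energy} to $W$ gives
\[
\|\partial_t^2W\|+\|\nabla_g\partial_tW\|+\|\partial_tW\|\le C\rho^{-1}\|F_\rho\|_{H^1([0,T];L^2(M))}+(\text{lower order terms}),
\]
that is, $\|\partial_tR_\rho\|+\|\nabla_gR_\rho\|+\|R_\rho\|\lesssim\rho^{-1}\|F_\rho\|_{H^1L^2}$. The lower order terms are handled by Gronwall's inequality.

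The obstacle is handling the terms with time-dependent coefficients and closing the estimate for these lower order terms. I expect to use Gronwall together with the bound $\|R_\rho\|_{L^2}\le T\|\partial_tR_\rho\|_{L^2}$, which holds because $R_\rho(0)=0$. An alternative route is to apply the energy identity directly with the multiplier $\partial_t^2R_\rho$ and integrate the source term by parts in $t$. Combining the two estimates yields \eqref{remainesti-new}.
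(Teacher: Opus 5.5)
Your proposal is correct and follows essentially the same route as the paper. The paper also gets the top-order terms from the unweighted energy estimate. It then sets $V_\rho=\int_0^t R_\rho\,ds$ and gains $\rho^{-1}$ by integrating $\int_0^t \mathrm{e}^{\mathrm{i}\rho s}F_\rho\,ds$ by parts. The time dependence of $\alpha,q$ appears as the Volterra terms $\int_0^t\partial_s\alpha\,\partial_s^2V_\rho\,ds$ and $\int_0^t\partial_s q\,\partial_sV_\rho\,ds$, which are closed by Gronwall on the energy of $V_\rho$.

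Two minor points. These terms are not really ``lower order'', since $\partial_t^2V_\rho$ is top order in the energy of $V_\rho$; what makes Gronwall work is the time-integral structure. Your $G$ is also missing the overall minus sign coming from the source $-\mathrm{e}^{\mathrm{i}\rho t}F_\rho$, which does not affect the estimate.
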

\begin{proof}
The proof follows the  arguments in \cite[Section 2.1]{kian_recovery_2019} and \cite[Section 4.2]{fu2023inverse}. 

By definition, $q\in E^{m}$ implies $q\in C([0,T];H^{m}(M))$ and $\partial_{t}q\in C([0,T];H^{m-1}(M))$. Since $m>n+1$, the Sobolev embedding theorem yields
$H^{m}(M),\,H^{m-1}(M)\hookrightarrow W^{1,\infty}(M)$,
and hence $q,\nabla_{g}q,\partial_{t}q,\nabla_{g}\partial_{t}q\in L^{\infty}(\mathcal Q)$. In particular, $q\in W^{1,\infty}(\mathcal Q)$. By the same argument, we have
$\alpha\in W^{1,\infty}(\mathcal Q)$.

We define
\[
f_\rho(x,t):=-e^{i\rho t}F_\rho(x,t).
\]

We divide the proof into two steps.

\textbf{Estimate of $\partial_t^2R_\rho$ and $\nabla_g\partial_tR_\rho$:}
Applying the basic energy estimate established in  Lemma \ref{lemma2.2} to \eqref{remain}, we obtain
\begin{equation}\label{eq:high2}
\|\partial_t^2 R_\rho\|_{L^2(\mathcal Q)}
+\|\nabla_g\partial_tR_\rho\|_{L^2(\mathcal Q)}
\le C\|f_\rho\|_{L^2(\mathcal Q)}
\le C\|F_\rho\|_{L^2(\mathcal Q)}\le C\|F_\rho\|_{H^1([0,T];L^2(M))}.
\end{equation}

\textbf{Estimate of $R_\rho$, $\partial_t R_\rho$ and $\nabla_g R_\rho$:}
Set
\[
V_\rho(x,t):=\int_0^t R_\rho(x,s)\,ds.
\]

Integrating \eqref{remain} over $(0,t)$ and using the vanishing initial data, we get
\begin{equation}\label{eq:Vrho}
\left\{
\begin{aligned}
&\partial_t^3V_\rho+\alpha\partial_t^2V_\rho-\Delta_g\partial_tV_\rho-c^2\Delta_gV_\rho+q\partial_tV_\rho
=G_\rho+K_{\rho,1}+K_{\rho,2}   &&\text{in } \mathcal Q,  \\
&V_\rho=0 \quad &&\text{on }\Gamma,\qquad  \\
&V_\rho(0,\cdot)=\partial_tV_\rho(0,\cdot)=\partial_t^2V_\rho(0,\cdot)=0  &&\text{in } M,
\end{aligned}
\right.
\end{equation}
where
\begin{align}
G_\rho(x,t)&:=\int_0^t f_\rho(x,s)\,ds
=-\int_0^t e^{i\rho s}F_\rho(x,s)\,ds,\label{eq:G}\\
K_{\rho,1}(x,t)&:=\int_0^t \partial_s\alpha(x,s)\,\partial_s^2V_\rho(x,s)\,ds,\label{eq:K1}\\
K_{\rho,2}(x,t)&:=\int_0^t \partial_sq(x,s)\,\partial_sV_\rho(x,s)\,ds.\label{eq:K3}
\end{align}

For $\tau\in[0,T]$, define
\[
\mathcal N(\tau):=
\sup_{0\le t\le \tau}
\Bigl(
\|V_\rho(t)\|_{H^2(M)}
+\|\partial_tV_\rho(t)\|_{H^1(M)}
+\|\partial_t^2V_\rho(t)\|_{L^2(M)}
\Bigr).
\]
Applying basic energy estimate to \eqref{eq:Vrho} on the time interval $[0,\tau]$, we obtain
\begin{equation}\label{eq:N1}
\mathcal N(\tau)
\le C\Bigl(
\|G_\rho\|_{L^2((0,\tau)\times M)}
+\|K_{\rho,1}\|_{L^2((0,\tau)\times M)}
+\|K_{\rho,2}\|_{L^2((0,\tau)\times M)}
\Bigr).
\end{equation}

We next estimate the two Volterra terms, $K_{\rho,1}$ and $K_{\rho,2}$. We shall use the elementary inequality
\begin{equation}\label{eq:hardy}
\int_0^\tau \Bigl\|\int_0^t h(s)\,ds\Bigr\|_X^2\,dt
\le \tau^2 \int_0^\tau \|h(t)\|_X^2\,dt
\end{equation}
for any Hilbert space $X$.

For $K_{\rho,1}$, by \eqref{eq:hardy},
\[
\|K_{\rho,1}\|_{L^2((0,\tau)\times M)}^2
\le \tau^2 \|\partial_t\alpha\|_{L^\infty(\mathcal Q)}^2
\int_0^\tau \|\partial_t^2V_\rho(t)\|_{L^2(M)}^2\,dt.
\]
Since
$\|\partial_t^2V_\rho(t)\|_{L^2(M)}\le \mathcal N(t)$,
we obtain
\begin{equation}\label{eq:K1est}
\|K_{\rho,1}\|_{L^2((0,\tau)\times M)}^2
\le C \int_0^\tau \mathcal N(t)^2\,dt.
\end{equation}

Similarly, for $K_{\rho,2}$, using \eqref{eq:hardy} and
\[
\|\partial_tV_\rho(t)\|_{L^2(M)}\le \|\partial_tV_\rho(t)\|_{H^1(M)}\le \mathcal N(t),
\]
we have
\begin{equation}\label{eq:K3est}
\|K_{\rho,2}\|_{L^2((0,\tau)\times M)}^2
\le C \int_0^\tau \mathcal N(t)^2\,dt.
\end{equation}

Substituting \eqref{eq:K1est} and \eqref{eq:K3est} into \eqref{eq:N1}, we obtain
\begin{equation}\label{eq:N2}
\mathcal N(\tau)
\le C\|G_\rho\|_{L^2((0,\tau)\times M)}
+ C\Bigl(\int_0^\tau \mathcal N(t)^2\,dt\Bigr)^{1/2}.
\end{equation}
Squaring both sides and using $(a+b)^2\le 2a^2+2b^2$, we get
\begin{equation}\label{eq:N3}
\mathcal N(\tau)^2
\le C\|G_\rho\|_{L^2((0,\tau)\times M)}^2
+ C\int_0^\tau \mathcal N(t)^2\,dt.
\end{equation}

We now estimate $G_\rho$. Integrating by parts in time in \eqref{eq:G}, we obtain
\[
G_\rho(x,t)
=
-\frac{1}{i\rho}
\left(
e^{i\rho t}F_\rho(x,t)-F_\rho(x,0)-\int_0^t e^{i\rho s}\partial_sF_\rho(x,s)\,ds
\right).
\]
Therefore,
\begin{equation}\label{eq:Gest}
\|G_\rho\|_{L^2((0,\tau)\times M)}
\le C\rho^{-1}\|F_\rho\|_{H^1([0,\tau];L^2(M))}
\le C\rho^{-1}\|F_\rho\|_{H^1([0,T];L^2(M))},
\end{equation}
where we used the trace estimate at $t=0$ and the Cauchy--Schwarz inequality.

Combining \eqref{eq:N3} and \eqref{eq:Gest}, and then applying Gronwall's inequality, we obtain
\begin{equation}\label{eq:N4}
\mathcal N(\tau)\le C\rho^{-1}\|F_\rho\|_{H^1([0,T];L^2(M))}
\qquad \text{for all } \tau\in[0,T].
\end{equation}
In particular, taking $\tau=T$ and recalling that
\[
R_\rho=\partial_tV_\rho,\qquad
\partial_tR_\rho=\partial_t^2V_\rho,\qquad
\nabla_g R_\rho=\nabla_g\partial_tV_\rho,
\]
we deduce
\begin{align}
\|R_\rho\|_{L^2(\mathcal Q)}
&\le T^{1/2}\sup_{0\le t\le T}\|\partial_tV_\rho(t)\|_{L^2(M)}
\le CT^{1/2}\mathcal N(T),\label{eq:R0}\\
\|\partial_tR_\rho\|_{L^2(\mathcal Q)}
&\le T^{1/2}\sup_{0\le t\le T}\|\partial_t^2V_\rho(t)\|_{L^2(M)}
\le CT^{1/2}\mathcal N(T),\label{eq:R1}\\
\|\nabla_g R_\rho\|_{L^2(\mathcal Q)}
&\le T^{1/2}\sup_{0\le t\le T}\|\nabla_g\partial_tV_\rho(t)\|_{L^2(M)}
\le CT^{1/2}\mathcal N(T).\label{eq:R2}
\end{align}
Hence, by \eqref{eq:N4},
\begin{equation}\label{eq:low}
\|R_\rho\|_{L^2(\mathcal Q)}
+\|\partial_tR_\rho\|_{L^2(\mathcal Q)}
+\|\nabla_g R_\rho\|_{L^2(\mathcal Q)}
\le C\rho^{-1}\|F_\rho\|_{H^1([0,T];L^2(M))}.
\end{equation}

Finally, combining \eqref{eq:low} with \eqref{eq:high2}, we obtain
\begin{align*}
&\rho\Bigl(\|R_\rho\|_{L^2(\mathcal Q)}
+\|\partial_tR_\rho\|_{L^2(\mathcal Q)}
+\|\nabla_g R_\rho\|_{L^2(\mathcal Q)}\Bigr)
+\|\partial_t^2R_\rho\|_{L^2(\mathcal Q)}
+\|\nabla_g\partial_tR_\rho\|_{L^2(\mathcal Q)} \\
&\le C\|F_\rho\|_{H^1([0,T];L^2(M))}.
\end{align*}

Since $\varphi$, $a_0$, and $a_1$ are independent of $\rho$ and have the required finite regularity, it follows from the definition of $F_\rho$ that there exists a constant $C_0>0$, independent of $\rho$, such that 
\[
\|F_\rho\|_{H^1([0,T];L^2(M))}\le C_0.
\]
This completes the proof.
\end{proof}

\begin{remark}\label{rema}
By an analogous argument, the adjoint equation 
\begin{equation}
P^{\star} v:=-\partial_t^3v+\partial_t^2(\alpha v)+\Delta_g\partial_tv-\Delta_g(c^2v)-\partial_t(qv)=0
\end{equation}
admits a solution of the form
\begin{equation}
v=\mathrm{e}^{-\mathrm{i} \rho(t+\varphi(x))}(a_0+\rho^{-1}a_1)+R_{\rho},
\end{equation}
where $\varphi(x)=\operatorname{dist}(y,x)$ with $y \in \partial \widetilde M$ and $a_0(t,r,\theta)=\chi(r+t) h(\theta)b(r,\theta)^{-\frac{1}{4}}\mathrm{e}^{\frac{1}{2}\int_0^t \gamma(\tau,r+t-\tau,\theta) d\tau    }$.
Moreover, there exists a constant $C_1$ such that 
\begin{align}
&\rho\left( \| R_{\rho} \|_{L^2(\mathcal Q)} + \|\partial_t R_{\rho} \|_{L^2(\mathcal Q)}+ \|\nabla_g R_{\rho} \|_{L^2(\mathcal Q)} \right)+\| \partial_t^2 R_{\rho} \|_{L^2(\mathcal Q)}+\| \nabla_g \partial_tR_{\rho} \|_{L^2(\mathcal Q)} 
 \leq C_1.\label{remainestim}
\end{align}
\end{remark}
\section{Unique recovery of lower-order coefficients in the MGT equation}\label{uniq}

In this section, we prove the unique recovery of $(\alpha,q)$ from the all boundary measurement map $\mathcal L_{\alpha,q}$ in the linearized MGT equation. The linearized MGT equation is as follows:
\begin{equation}\label{MGTso}
\left\{
\begin{aligned}
&\partial_t^3v + \alpha \partial_{t}^2 v  - \Delta_g \partial_t v-c^2\Delta_g v+q \partial_t v = 0
    && \text{in } \mathcal{Q}, \\
&v= f
    && \text{on } \Gamma, \\
&v(0,\cdot)=h^{(0)},  \ \partial_t v(0,\cdot) = h^{(1)},\ \partial_t^2 v(0,\cdot)=h^{(2)} 
    && \text{in } M.
\end{aligned}
\right.
\end{equation}
We first define the all boundary measurement map for the MGT equation \eqref{MGTso}
\begin{equation*}
\mathcal L_{\alpha,q}: \big(f,h^{(0)},h^{(1)},h^{(2)}\big) \to \bigl((\partial_{\nu}\partial_t v +c^2\partial_{\nu }v)|_{\Gamma},v(T,\cdot),\partial_t v(T,\cdot),\partial_t^2 v(T,\cdot) \bigr).
\end{equation*}
\begin{theorem}\label{unirecover}
Assume that $\alpha_j,q_j \in E_{0,m-2}^m$ for $j=1,2$, and $c\in C^{\infty} (M)$.
Let $\mathcal L_{\alpha_j,q_j}$ denote the corresponding all boundary measurement map for the MGT equation \eqref{MGTso} with
$(\alpha,q)$
replaced by
$(\alpha_j,q_j)$, for $j=1,2$.
Suppose that
\[
\mathcal L_{\alpha_1,q_1}\big(f,h^{(0)},h^{(1)},h^{(2)}\big)
=
\mathcal L_{\alpha_2,q_2}\big(f,h^{(0)},h^{(1)},h^{(2)}\big),
\qquad \forall \big(f,h^{(0)},h^{(1)},h^{(2)}\big)\in  R.
\]
Then we have
\[
\alpha_2=\alpha_1, \quad
q_2=q_1  \quad \text{ in } \mathcal Q.
\]
\end{theorem}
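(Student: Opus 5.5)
We will derive an integral identity from the equality of the measurement maps, and then test it against the geometric optics solutions of Section~\ref{go}. Let $v_1$ solve \eqref{MGTso} with coefficients $(\alpha_1,q_1)$ and data $(f,h^{(0)},h^{(1)},h^{(2)})\in R$. Let $v_2$ solve the same problem with $(\alpha_2,q_2)$ and the same data. Then $w=v_1-v_2$ satisfies
\[
P_2 w=(\alpha_2-\alpha_1)\partial_t^2 v_1+(q_2-q_1)\partial_t v_1 \quad\text{in } \mathcal Q,
\]
with zero Dirichlet data and zero initial data. Equality of $\mathcal L_{\alpha_1,q_1}$ and $\mathcal L_{\alpha_2,q_2}$ gives $(\partial_\nu\partial_t w+c^2\partial_\nu w)|_\Gamma=0$ and $w=\partial_tw=\partial_t^2w=0$ at $t=T$.

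Multiply by a solution $u$ of the adjoint equation $P_2^\star u=0$ from Remark~\ref{rema} and integrate over $\mathcal Q$. Integrate by parts three times in $t$ and twice in $x$. The terms at $t=0$ vanish by the zero initial data, and the terms at $t=T$ vanish by the zero final data. On $\Gamma$, $w=\partial_tw=0$, so the boundary terms reduce to $\int_\Gamma(\partial_\nu\partial_t w+c^2\partial_\nu w)\,\overline{\cdot}$-type pairings against $u$ and $\partial_t u$. The $\partial_\nu\partial_t w$ term is first moved onto $\partial_t u$ by integrating by parts in time, which is legitimate since $\partial_\nu w$ vanishes at $t=0$ and $t=T$. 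These boundary terms then vanish because $\partial_\nu\partial_t w+c^2\partial_\nu w=0$ on $\Gamma$ and $\partial_\nu w=0$ at $t=0,T$ (so $\partial_\nu w=0$ by solving the ODE in $t$). This yields the identity
\[
\int_{\mathcal Q}\bigl(\tilde\alpha\,\partial_t^2 v_1+\tilde q\,\partial_t v_1\bigr)\,u\,\mathrm dV_g\,\mathrm dt=0,\qquad \tilde\alpha=\alpha_2-\alpha_1,\ \tilde q=q_2-q_1,
\]
valid for all solutions $v_1$ of $P_1v_1=0$ (density of data in $R$ handles the GO solutions) and all solutions $u$ of $P_2^\star u=0$.

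We insert $v_1=e^{\mathrm i\rho(t+\varphi)}(a_0+\rho^{-1}a_1)+R_\rho$ from Lemma~\ref{WKB} and $u=e^{-\mathrm i\rho(t+\varphi)}(b_0+\rho^{-1}b_1)+\tilde R_\rho$ from Remark~\ref{rema}, using the same $y$ and $\varphi$. Then $\partial_t^2v_1=-\rho^2e^{\mathrm i\rho(t+\varphi)}a_0+O(\rho)$ in $L^2$, since by \eqref{remainesti-new} $\|\partial_t^2R_\rho\|_{L^2(\mathcal Q)}=O(1)$. Dividing by $\rho^2$ and letting $\rho\to\infty$ gives $\int_{\mathcal Q}\tilde\alpha\,a_0b_0=0$. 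In $a_0b_0$ the exponential factors give $\exp\bigl(-\tfrac12\int_0^t(\gamma_1-\gamma_2)\bigr)$, with $\gamma_1-\gamma_2=-\tilde\alpha$. In polar coordinates with $\mathrm dV_g=b^{1/2}\,\mathrm dr\,\mathrm d\theta$ the factors $b^{-1/2}$ cancel, and we get
\[
\int_0^T\!\!\int_{S_y}\!\int \chi_1\chi_2(r+t)\,h(\theta)\,\tilde\alpha\, e^{\frac12\int_0^t\tilde\alpha(\tau,r+t-\tau,\theta)\,d\tau}\,\mathrm dr\,\mathrm d\theta\,\mathrm dt=0.
\]
Set $s=r+t$. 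The integrand along each line $\{r+t=s\}$ is $\tilde\alpha\,e^{\frac12\int_0^t\tilde\alpha}=2\partial_t e^{\frac12\int_0^t\tilde\alpha}$ along the characteristic. Since $\chi$ and $h$ are arbitrary, we obtain
\[
\exp\Bigl(\tfrac12\int_{\ell}\tilde\alpha\Bigr)=1
\]
along every light ray $\ell$ from $t=0$ to the exit point, whether through $t=T$ or through $\Gamma$. Here $\tilde\alpha$ is extended by zero outside $\mathcal Q$; this is consistent because $\tilde\alpha$ vanishes to high order at $\{t=0\}\times\partial M$.

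This is a light-ray transform of $\tilde\alpha$ on $\mathbb R\times M$ with metric $-dt^2+g$, taken over null geodesics issued from $\partial\widetilde M$. We would invert it by the standard argument of \cite{kian_recovery_2019,fu2023inverse}. First take a Fourier transform in $t$ of the zero extension. This gives an attenuated geodesic ray transform $\int \hat{\tilde\alpha}(\lambda,\gamma(s))e^{-\mathrm i\lambda s}\,ds=0$. For $\lambda$ near $0$ this reduces to an injectivity result for the geodesic ray transform on simple manifolds, and analyticity in $\lambda$ of $\hat{\tilde\alpha}$ (compact support in $t$) then gives $\tilde\alpha=0$. This inversion, including the justification of the support and extension issues near $t=T$, is the main obstacle.

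Once $\alpha_1=\alpha_2$, the $\rho^2$ term drops out. The next order, $\rho^1$, of the identity gives $\int_{\mathcal Q}\tilde q\,a_0b_0=0$, with the remainder contributions being $O(1)$ by \eqref{remainesti-new} and \eqref{remainestim}, and now $a_0b_0=\chi_1\chi_2 h\,b^{-1/2}$. The same light-ray inversion then yields $q_1=q_2$.
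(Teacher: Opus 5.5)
Your proposal is correct and follows essentially the same route as the paper. The paper pairs the difference equation with an adjoint GO solution and uses the $\rho^{2}$ term to get a weighted light-ray identity for $\alpha_2-\alpha_1$; the exponential weight collapses because the integrand is an exact $t$-derivative along $r+t=s$. It then takes a Fourier transform in $t$, uses injectivity of the weighted ray transform $I_{i\lambda}$ for small $|\lambda|$ (via the Frigyik--Stefanov--Uhlmann perturbation theorem) together with analyticity in $\lambda$, and finally uses the $\rho^{1}$ term for $q$. The only deviation is your detour through $\partial_\nu w\equiv 0$ on $\Gamma$. It is valid but unnecessary: after Green's formula, using $w|_\Gamma=0$, the boundary terms are exactly $-\int_\Gamma u\,(\partial_\nu\partial_t w+c^2\partial_\nu w)$, which vanishes directly.
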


\begin{proof}
Denote $\tilde v=v_1-v_2$. Then $\tilde v$ satisfies the equation
\begin{equation}\label{MGTminus}
\left\{
\begin{aligned}
&\partial_t^3\tilde v + \alpha_1 \partial_{t}^2 \tilde v  - \Delta_g \partial_t \tilde v-c^2\Delta_g \tilde v+q_1 \partial_t \tilde v = \alpha \partial_t^2v_2+q \partial_t v_2
    && \text{in } \mathcal{Q}, \\
&\tilde v= 0
    && \text{on } \Gamma, \\
&\tilde v(0,\cdot)= \partial_t \tilde v(0,\cdot) = \partial_t^2 \tilde v(0,\cdot)=0 
    && \text{in } M,
\end{aligned}
\right.
\end{equation}
where $\alpha=\alpha_2-\alpha_1$ and $q=q_2-q_1$. Moreover, the equality $\mathcal L_{\alpha_1,q_1}=\mathcal L_{\alpha_2,q_2}$ implies that $\tilde v$ satisfies the boundary condition $\partial_{\nu} \partial_t \tilde v+c^2 \partial_{\nu} \tilde v=0$ on $\Gamma$ and final condition $\tilde v(T,\cdot)=\partial_t \tilde v(T,\cdot)=\partial_t^2\tilde v(T,\cdot)=0$ in $M$. 

Let $v_0$ be a solution of the adjoint equation 
\begin{equation*}
-\partial_t^3v_0+\partial_t^2(\alpha_1 v_0)+\Delta_g\partial_tv_0-\Delta_g(c^2v_0)-\partial_t(q_1v_0)=0.
\end{equation*}
Then we derive an integral identity
\begin{align}
&\int_0^T \int_M  (\alpha \partial_t^2 v_2+q\partial_t v_2)v_0 dV_g dt \nonumber \\
&=\int_0^T \int_M(\partial_t^3\tilde v + \alpha_1 \partial_{t}^2 \tilde v  - \Delta_g \partial_t \tilde v-c^2\Delta_g \tilde v+q_1 \partial_t \tilde v )v_0 dV_g dt\nonumber \\
&=\int_0^T\int_M\tilde v(-\partial_t^3v_0+\partial_t^2(\alpha_1 v_0)+\Delta_g\partial_tv_0-\Delta_g(c^2v_0)-\partial_t(q_1v_0)) \nonumber \\
&+(\partial_t \tilde v \Delta_g v_0-v_0\Delta_g \partial_t \tilde v )+(\tilde v \Delta_g(c^2 v_0)-c^2 v_0\Delta_g \tilde v) dV_g dt \nonumber \\
&=\int_0^T\int_{\partial M}(\partial_t \tilde v \partial_{\nu} v_0+\tilde v \partial_{\nu}(c^2 v_0)-v_0(\partial_{\nu} \partial_{t} \tilde v+c^2 \partial_{\nu} \tilde v)) dS_g dt  \nonumber \\
&=0.  \label{ideM}
\end{align}
Using Lemma \ref{WKB} and Remark \ref{rema}, we choose solutions of the form
\begin{align}
v_2=\mathrm{e}^{\mathrm{i}\rho(t+\varphi)}(a_{2,0}+\rho^{-1}a_{2,1})+R_{2,\rho}, \ v_0=\mathrm{e}^{-\mathrm{i}\rho(t+\varphi)}(a_{0,0}+\rho^{-1}a_{0,1})+R_{0,\rho},
\end{align}
where the remainder terms $R_{j,\rho}$ satisfy the estimate that 
\begin{equation}
\rho\left(\|R_{j,\rho}\|_{L^2 (\mathcal Q)}+\| \partial_t R_{j,\rho} \|_{L^2(\mathcal Q)}\right)+\| \partial_t^2 R_{j,\rho}\|_{L^2(\mathcal Q)}\le C,
\end{equation}
for $j=0,2$.
We choose the principal amplitude functions $a_{j,0}$, $j=0,2$,  defined in the polar coordinates associated with $y\in \partial \widetilde M$ by 
\begin{equation}
a_{2,0}=\chi(r+t)h(\theta)b(r,\theta)^{-\frac{1}{4}} \mathrm{e}^{-\frac{1}{2}\int_0^t \gamma_2(\tau,r+t-\tau,\theta)d\tau}, \ \ a_{0,0}=b(r,\theta)^{-\frac{1}{4}} \mathrm{e}^{\frac{1}{2} \int_0^t \gamma_1(\tau,r+t-\tau,\theta)d\tau},
\end{equation}
where $\chi$ is an arbitrary smooth function on $\mathbb{R}$ and $h$ is an arbitrary smooth function on the unit sphere at $y$. 
Substituting  geometric optics solutions into  \eqref{ideM},  and multiplying by $-\rho^{-2}$, we obtain, by letting $\rho\to\infty$,
\begin{equation}
\lim_{\rho\to\infty} -\rho^{-2}\int_0^T \int_M  \alpha \partial_t^2 v_2 v_0 +q\partial_t v_2 v_0 dV_g dt=\int_0^T\int_M \alpha a_{2,0} a_{0,0} dV_g dt=0.
\end{equation}
Since $\gamma:=\gamma_2-\gamma_1=(\alpha_2-c^2)-(\alpha_1-c^2)=\alpha$ in $\mathcal Q$,
we choose zero extensions
$\widetilde\alpha,\widetilde\gamma \in L^\infty(\mathbb R\times \widetilde M)$. Then we obtain 
$\widetilde\alpha= \widetilde\gamma$ in  $\mathbb R\times \widetilde M$.

 Denoting by $\tau_+(y,\theta)$ the time of existence in $\widetilde M$ of the maximal geodesic $\sigma_{y,\theta}$ satisfying $\sigma_{y,\theta}(0)=y$ and $\sigma'_{y,\theta}(0)=\theta$,
we obtain in the polar normal coordinates,
\begin{equation}\label{eq:main-identity-gamma}
\int_{0}^{+\infty}\int_{S_y\widetilde M}\int_{0}^{\tau_+(y,\theta)}
\widetilde \gamma(t,\sigma_{y,\theta}(r))\,h(\theta)\,\chi(r+t)\,
\mathrm{e}^{-\frac{1}{2}
\int_{0}^{t}\widetilde\gamma(\tau,\sigma_{y,\theta}(r+t-\tau))\,d\tau
}\,dr\,d\theta\,dt=0,
\end{equation}
for every $y\in \partial \widetilde M$, every $h\in C^\infty(S_y\widetilde M)$, and every $\chi\in C_c^\infty(\mathbb R)$. Here we use the fact that $dV_g=b(r,\theta)^{\frac{1}{2}}dr d\theta$  in the polar normal coordinates.

We next utilize the integral identity  to prove that
\[
\widetilde\gamma\equiv 0 \qquad \text{in } (0,T)\times \widetilde M.
\]

Fix $y\in\partial \widetilde M$ and denote $\sigma_{y,\theta}(r)=(r,\theta)$. Since \eqref{eq:main-identity-gamma} holds for all $h\in C^\infty(S_y \widetilde M)$, it follows that for every $\theta\in S_y \widetilde M$,
\begin{equation}\label{eq:thetawise-identity}
\int_{0}^{+\infty}\int_{0}^{\tau_+(y,\theta)}
\widetilde\gamma(t,\sigma_{y,\theta}(r))\,\chi(r+t)\,
\mathrm{e}^{
-\frac{1}{2} \int_{0}^{t}\widetilde \gamma(\tau,\sigma_{y,\theta}(r+t-\tau))\,d\tau} dr\,dt=0.
\end{equation}

We define the inward pointing boundary of unit sphere bundle $\partial_+ S\widetilde M= \{ (x,\theta)\in S \widetilde M: x\in \partial \widetilde M, \langle \theta, \nu(x)\rangle_g<0  \}$.
Now fix $(y,\theta)\in\partial_{+} S \widetilde M$ and write $\tau_+:=\tau_+(y,\theta)$. 
Introduce the change of variables
\[
s=t+r.
\]
Then $t=s-r$, and the integration region
\[
t\ge 0,\qquad 0\le r\le \tau_+
\]
becomes
\[
s\ge 0,\qquad 0\le r\le \min\{s,\tau_+\}.
\]
Hence \eqref{eq:thetawise-identity} can be rewritten as
\[
\int_0^{+\infty} \chi(s)\,B(s,y,\theta)\,ds=0,
\]
where
\begin{equation}\label{eq:def-B}
B(s,y,\theta)
:=
\int_0^{\min\{s,\tau_+\}}
\widetilde \gamma(s-r,\sigma_{y,\theta}(r))
\mathrm{e}^{ 
-\frac{1}{2}\int_0^{s-r} \widetilde\gamma(\tau,\sigma_{y,\theta}(s-\tau))\,d\tau
} dr .
\end{equation}
Since $\chi\in C_c^\infty(\mathbb R)$ is arbitrary, we obtain
\begin{equation}\label{distri}
B(\cdot,y,\theta)=0 \qquad \text{in } \mathcal D'(0,\infty).
\end{equation}

We now compute $B(s,y,\theta)$ more explicitly. Set
\[
a(s):=\max\{0,s-\tau_+\}.
\]
Using the substitution $t=s-r$, formula \eqref{eq:def-B} becomes
\[
B(s,y,\theta)
=
\int_{a(s)}^{s}
\widetilde \gamma(t,\sigma_{y,\theta}(s-t))
\mathrm{e}^{
-\frac{1}{2}\int_0^{t}\widetilde\gamma(\tau,\sigma_{y,\theta}(s-\tau))\,d\tau
}dt.
\]
For fixed $(s,y,\theta)$, define
\[
E_s(t):=
\mathrm{e}^{ 
-\frac{1}{2}\int_0^{t}\widetilde\gamma(\tau,\sigma_{y,\theta}(s-\tau))\,d\tau
}.
\]
Then
\[
E_s'(t)
=
-\frac{1}{2}\widetilde \gamma(t,\sigma_{y,\theta}(s-t))\,E_s(t),
\]
and hence
\begin{align*}
B(s,y,\theta)
&=
-2\int_{a(s)}^{s}E_s'(t)\,dt
=
-2(E_s(s)-E_s(a(s))) \\
&=
2\mathrm{e}^{
-\frac{1}{2}\int_0^{a(s)}\widetilde\gamma(\tau,\sigma_{y,\theta}(s-\tau))\,d\tau
}
\left(
1-
\mathrm{e}^{
-\frac{1}{2}\int_{a(s)}^{s}\widetilde\gamma(\tau,\sigma_{y,\theta}(s-\tau))\,d\tau
}
\right).
\end{align*}
Since the function $B(\cdot,y,\theta)$ is continuous in the variable $s$, we have $B(s,y,\theta)=0$ for all $s>0$.
 Using that $\widetilde\gamma$ is real-valued, we deduce
\[
\int_{a(s)}^{s}\widetilde\gamma(t,\sigma_{y,\theta}(s-t))\,dt=0,
\qquad \forall  s>0.
\]
Returning to the variable $r=s-t$, this is
\[
\int_0^{\min\{s,\tau_+\}}
\widetilde\gamma(s-r,\sigma_{y,\theta}(r))\,dr=0,
\qquad \forall  s>0.
\]
Since $\widetilde\gamma(t,\cdot)=0$ for $t<0$, the last identity can be written uniformly as
\begin{equation}\label{eq:light-ray-zero}
\int_0^{\tau_+(y,\theta)}
\widetilde\gamma(s-r,\sigma_{y,\theta}(r))\,dr=0,
\qquad \forall  s\in \mathbb R,\ \forall (y,\theta)\in \partial_+S \widetilde M.
\end{equation}
Therefore the light ray transform of $\widetilde\gamma$ vanishes.

Next, take the Fourier transform of \eqref{eq:light-ray-zero} in the $s$-variable. Denote
\[
\widehat{\widetilde\gamma}(\lambda,x)
:=
\int_{\mathbb R}e^{-i\lambda t}\widetilde\gamma(t,x)\,dt.
\]
By Fubini's theorem, for every $\lambda\in \mathbb R$,
\[
0
=
\int_{\mathbb R}e^{-i\lambda s}
\left(
\int_0^{\tau_+(y,\theta)}
\widetilde\gamma(s-r,\sigma_{y,\theta}(r))\,dr
\right)\,ds
=
\int_0^{\tau_+(y,\theta)}
e^{-i\lambda r}\,
\widehat{\widetilde\gamma}\bigl(\lambda,\sigma_{y,\theta}(r)\bigr)\,dr.
\]
Therefore
\[
I_{i\lambda}\bigl(\widehat{\widetilde\gamma}(\lambda,\cdot)\bigr)(y,\theta)=0,
\qquad \forall (y,\theta)\in \partial_+S\widetilde M,\ \forall \lambda\in \mathbb R.
\]
We regard \(I_{i\lambda}\) as a weighted geodesic ray transform associated with the same geodesic family, with weight
\[
w_\lambda(x,\xi)=e^{-i\lambda \tau_-(x,\xi)},
\]
so that along each geodesic \(\sigma_{y,\theta}\),
\[
w_\lambda(\sigma_{y,\theta}(r),\dot\sigma_{y,\theta}(r))=e^{-i\lambda r}.
\]
Taking $w_0\equiv 1$ as the reference weight, the  geodesic ray
transform is injective  on the simple manifold $(\widetilde M,g)$.
Moreover, $w_\lambda\to w_0$ in $C^2$ as $\lambda\to0$.
Applying the perturbation result for weighted ray transforms
\cite[Theorem 2.3(b)]{frigyik2008x} to the full geodesic family on
a slightly larger simple extension of $(\widetilde M,g)$, we obtain that the
corresponding weighted geodesic ray transform $I_{i\lambda}$ remains injective
for all sufficiently small $|\lambda|$.
Hence there exists \(\varepsilon>0\) such that
\[
\widehat{\widetilde\gamma}(\lambda,\cdot)=0,
\qquad |\lambda|<\varepsilon.
\]

Finally, since $\widetilde\gamma$ is  compactly supported in the time variable, for every fixed $x\in \widetilde M$, the function
\[
z\mapsto \widehat{\widetilde\gamma}(z,x)
:=
\int_{\mathbb R} e^{-izt}\widetilde\gamma(t,x)\,dt,
\qquad z\in\mathbb C,
\]
is entire. As it vanishes on a real interval, whose points have accumulation points in \(\mathbb C\), it must vanish identically. Hence
\[
\widehat{\widetilde\gamma}(\lambda,x)\equiv 0
\qquad \forall \lambda\in \mathbb R,\ \forall x\in \widetilde M.
\]
By Fourier inversion,
\[
\widetilde\gamma(t,x)\equiv 0
\qquad \text{in } \mathbb R\times \widetilde M.
\]
Restricting to $(0,T)\times  M$, we obtain
\[
\gamma_1= \gamma_2,\quad  \alpha_1=\alpha_2.
\]
After substituting \(\alpha=0\) into the integral identity, the order-\(\rho\) term yields
\begin{equation}
\lim_{\rho \to +\infty} -\mathrm{i}\rho^{-1} \int_0^T \int_M q \partial_t v_2 v_0 dV_g dt =\int_0^T \int_M q a_{2,0}a_{0,0} dV_g dt=0.
\end{equation}
In the polar normal coordinates, this becomes 
\begin{equation}
\int_0^{+\infty}\int_{S_y \widetilde M} \int_0^{\tau_+(y,\theta)}q(t,r,\theta) h(\theta)\chi(r+t) dr d\theta dt=0.
\end{equation}
Repeating the argument above, we can conclude that $q(t,x)=0$ in $\mathcal Q$.
This completes the proof.

\end{proof}
\section{Proof of Theorem \ref{thme2}}\label{proo}

In this section, we provide the proof of Theorem \ref{thme2} and the proof of Corollary \ref{cor1.3}.
We divide the proof of Theorem \ref{thme2} into several steps. First, by applying first order linearization method, we show that the nonlinear equation induces the same all boundary measurement map for the corresponding linearized MGT equation. Then, by Theorem \ref{unirecover}, we uniquely recover the linear damping coefficients $(\tilde\alpha,\tilde q)$, where
\[
\tilde\alpha=\alpha-2\beta\partial_t u_0,
\qquad
\tilde q=q-2\partial_t(\beta\partial_t u_0).
\]
Next, using the second order linearization, we derive an integral identity for the nonlinear coefficient $\beta$. Finally, by inserting suitable geometric optics solutions into this identity, we obtain the unique recovery of $\beta$. This  yields the recovery of $\alpha$, $q$, and $F$ up to the gauge symmetry.
\begin{proof}
For $(f_j,h^{(0)}_j,h^{(1)}_j,h^{(2)}_j)\in R$, $j=1,2$, we choose $\varepsilon_1$ and $\varepsilon_2$ sufficiently small so  that 
\begin{equation}
(f,h^{(0)},h^{(1)},h^{(2)})=(f_0,h^{(0)}_0,h^{(1)}_0,h^{(2)}_0)+\varepsilon_1(f_1,h^{(0)}_1,h^{(1)}_1,h^{(2)}_1)+\varepsilon_2(f_2,h^{(0)}_2,h^{(1)}_2,h^{(2)}_2)\in G_{\varepsilon},
\end{equation}
where $G_{\varepsilon}$ is defined in \eqref{Gep}. By the well-posedness result in Theorem \ref{prop1.1}, there exists a unique solution $u_j$ to \eqref{JMGTso} with $(\alpha,q,\beta,F)$ replaced by $(\alpha_j,q_j,\beta_j,F_j)$ with boundary and initial data $(f,h^{(0)},h^{(1)},h^{(2)})$, near the solution $u_{0,j}$. Denote
\begin{equation}
v_{j,k}=\frac{\partial u_j}{\partial \varepsilon_k}\bigg|_{\varepsilon_1=\varepsilon_2=0}, \quad  w_j=\frac{\partial^2 u_j}{\partial \varepsilon_1 \partial \varepsilon_2}\bigg|_{\varepsilon_1=\varepsilon_2=0}.
\end{equation}
Applying  $\frac{\partial}{\partial \varepsilon_1}\big|_{\varepsilon_1=\varepsilon_2=0}$ to equation \eqref{JMGTso}, we obtain
\begin{equation}\label{firstlinear}
\left\{
\begin{aligned}
&\partial_t^3v_{j,1} + \tilde\alpha_j \partial_{t}^2 v_{j,1}  - \Delta_g \partial_t v_{j,1}-c^2\Delta_g v_{j,1}+\tilde q_j \partial_t v_{j,1} = 0
    && \text{in } \mathcal{Q}, \\
&v_{j,1}= f_1
    && \text{on } \Gamma, \\
&v_{j,1}(0,\cdot)=h^{(0)}_1,  \ \partial_t v_{j,1}(0,\cdot) = h^{(1)}_1,\ \partial_t^2 v_{j,1}(0,\cdot)=h^{(2)}_1 
    && \text{in } M,
\end{aligned}
\right.
\end{equation}
where $\tilde \alpha_j=\alpha_j-2\beta_j \partial_t u_{0,j}$ and $\tilde q_j=q_j-2\partial_t(\beta_j\partial_t u_{0,j})$.

Observe that 
\begin{equation}
\mathcal L_{\tilde \alpha_j,\tilde q_j}^{\textrm{linear}}\big(f_1,h^{(0)}_1,h^{(1)}_1,h^{(2)}_1\big)=\frac{\partial}{\partial \varepsilon_1}\Big(   \mathcal L_{\alpha_j,q_j,\beta_j,F_j}\big(f,h^{(0)},h^{(1)},h^{(2)}\big)   \Big)\Big|_{\varepsilon_1=\varepsilon_2=0}.
\end{equation}
Then we have 
\begin{equation}
\mathcal L_{\tilde \alpha_1,\tilde q_1}^{\textrm{linear}}\big(f_1,h^{(0)}_1,h^{(1)}_1,h^{(2)}_1\big)=\mathcal L_{\tilde \alpha_2,\tilde q_2}^{\textrm{linear}}\big(f_1,h^{(0)}_1,h^{(1)}_1,h^{(2)}_1\big),  \quad   \forall(f_1,h^{(0)}_1,h^{(1)}_1,h^{(2)}_1) \in R. 
\end{equation}
Using Theorem \ref{unirecover}, we obtain
\begin{equation}\label{relate}
\tilde \alpha_1=\tilde \alpha_2:=\tilde \alpha, \quad \tilde q_1=\tilde q_2:=\tilde q.
\end{equation}
By the well-posedness result for the linear MGT equation, Lemma \ref{lemma2.2}, we have
\begin{equation}
v_{1,1}=v_{2,1}:=v_1, \quad v_{1,2}=v_{2,2}=v_{2}.
\end{equation}
Applying $\frac{\partial^2}{\partial \varepsilon_1 \partial \varepsilon_2} \big|_{\varepsilon_1=\varepsilon_2=0}$ to equation \eqref{JMGTso}, we can see that
\begin{equation}\label{secondlinear}
\left\{
\begin{aligned}
&\partial_t^3 w_j + \tilde\alpha \partial_{t}^2 w_j  - \Delta_g \partial_t w_j-c^2\Delta_g w_j+\tilde q \partial_t w_j = 2\partial_t \left(\beta_j \partial_t v_1 \partial_t v_2  \right) 
    && \text{in } \mathcal{Q}, \\
&w_j= 0
    && \text{on } \Gamma, \\
&w_j(0,\cdot)= \partial_t w_j(0,\cdot) = \partial_t^2 w_j(0,\cdot)=0 
    && \text{in } M.
\end{aligned}
\right.
\end{equation}
Let $v_0$ solve the adjoint equation
\begin{equation}\label{adj}
-\partial_t^3v_0+\partial_t^2(\tilde \alpha v_0)+\Delta_g\partial_tv_0-\Delta_g(c^2v_0)-\partial_t(\tilde q v_0)=0    \quad \text{ in } \mathcal Q.
\end{equation}
Set
\begin{equation*}
w=w_2-w_1, \quad \delta\beta=\beta_2-\beta_1.
\end{equation*}
Note that
\begin{align*}
&\frac{\partial^2}{\partial \varepsilon_1 \partial \varepsilon_2}\Big( \mathcal{L}_{\alpha_j,q_j,\beta_j,F_j}\big(f,h^{(0)},h^{(1)},h^{(2)} \big)   \Big) \big|_{\varepsilon_1 =\varepsilon_2=0}\\ &=   \Big( \big( \partial_{\nu} \partial_t w_j+c^2 \partial_{\nu} w_j\big)\big|_{\Gamma}, w_j(T,\cdot),\partial_t w_j(T,\cdot),\partial_t^2 w_j(T,\cdot)  \Big),
\end{align*}
is determined uniquely by $\mathcal L_{\alpha_j,q_j,\beta_j,F_j}$. Then the same all boundary measurement map  implies $\partial_{\nu}\partial_t w+c^2 \partial_{\nu}w=0$ on $\Gamma$ and $w(T,\cdot)=\partial_t w(T,\cdot)=\partial_t^2 w(T,\cdot)=0$ in $M$.

Using integration by parts, we obtain
\begin{align}
& \int_0^T \int_M 2 \partial_t(\delta\beta \partial_tv_1 \partial_t v_2) v_0 \ dV_g dt \nonumber \\
&=\int_0^T\int_M (\partial_t^3 w+ \tilde \alpha \partial_t^2w- \Delta_g \partial_t w-c^2\Delta_g w+\tilde q \partial_t w)v_0 dV_g dt \nonumber \\
&=\int_0^T \int_Mw(-\partial_t^3v_0+\partial_t^2(\tilde \alpha v_0)+\Delta_g\partial_tv_0-\Delta_g(c^2v_0)-\partial_t(\tilde q v_0)) \nonumber \\
&+( \partial_t w \Delta_gv_0-v_0\Delta_g \partial_t w)+(w\Delta_g(c^2 v_0)-c^2 v_0 \Delta_g w)  \ dV_g dt \nonumber \\
&=\int_0^T \int_{\partial M} (\partial_tw \partial_{\nu} v_0- v_0\partial_{\nu}\partial_t w)+(w\partial_{\nu}(c^2 v_0)-c^2 v_0 \partial_{\nu}  w) \ dS_gdt \nonumber \\
&=\int_0^T \int_{\partial M} -v_0(\partial_{\nu} \partial_t w+c^2 \partial_{\nu} w) dS_g dt=0. \label{secondide}
\end{align}
We choose 
\begin{align}
&v_2=\mathrm{e}^{\mathrm{i} \rho(t+\varphi)}(a_{2,0}+\rho^{-1}a_{2,1})+R_{2,\rho}, \\
&v_0=\mathrm{e}^{-\mathrm{i} \rho (t+\varphi)}(a_{0,0}+\rho^{-1} a_{0,1})+R_{0,\rho},
\end{align}
with 
\begin{equation*}
\rho\left(\|R_{j,\rho}\|_{L^2 (\mathcal Q)}+\| \partial_t R_{j,\rho} \|_{L^2(\mathcal Q)}\right)+\| \partial_t^2 R_{j,\rho}\|_{L^2(\mathcal Q)}\le C, \ \text{ for } j=0,2.
\end{equation*}
Inserting these geometric optics solutions into the integral identity \eqref{secondide}, multiplying by $-\rho^{-2}$ and letting $\rho\to+\infty$, we obtain
\begin{equation}
\lim_{\rho \to +\infty} - \rho^{-2} \int_0^T \int_M 2\partial_t(\delta\beta\partial_t v_1 \partial_t v_2) v_0 dV_g dt=\int_0^T\int_M 2\delta\beta \partial_t v_1 a_{2,0}a_{0,0} dV_g dt=0.
\end{equation}
We choose $a_{2,0}$ and $a_{0,0}$ as
\begin{equation*}
a_{2,0}=\chi(r+t)h(\theta)b(r,\theta)^{-\frac{1}{4}} \mathrm{e}^{-\frac{1}{2}\int_0^t (\tilde \alpha-c^2)(\tau,r+t-\tau,\theta)d\tau}, \  a_{0,0}=b(r,\theta)^{-\frac{1}{4}} \mathrm{e}^{\frac{1}{2} \int_0^t (\tilde \alpha-c^2)(\tau,r+t-\tau,\theta)d\tau}.
\end{equation*}
Regarding $\delta\beta  \partial_t v_1$ as the unknown function and repeating the  argument used in the proof of Theorem \ref{unirecover}, we obtain
\[
\delta\beta \partial_t v_1=0 \qquad \text{in } \mathcal Q.
\]
 Taking an arbitrary solution $\mathcal{V}_0$ to  the adjoint equation \eqref{adj}, we obtain
\[
\int_0^T\int_M \delta\beta\, \partial_t v_1 \mathcal{V}_0\, dV_gdt=0.
\]
Since $v_1$ is an arbitrary solution to \eqref{firstlinear}, we repeat once again the argument in the proof of Theorem \ref{unirecover}, with $\delta \beta$ in place of the unknown function, we conclude that
\[
\delta\beta=0 \qquad \text{in } \mathcal Q.
\]
Hence
$\beta_1=\beta_2$ in $\mathcal Q$.

We denote 
\begin{equation*}
\psi=u_{0,2}-u_{0,1} \in E^{m+2},
\end{equation*}
with $\psi|_{\Gamma}=(\partial_{\nu} \partial_t \psi+c^2 \partial_{\nu} \psi)|_{\Gamma}=0$, $\psi(0,\cdot)=\partial_t \psi(0,\cdot)=\partial_t^2 \psi(0,\cdot)=0$ in $M$ and $\psi(T,\cdot)=\partial_t \psi(T,\cdot)=\partial_t^2\psi(T,\cdot)=0$ in $M$.
Then using \eqref{relate}, we obtain
\begin{equation}\label{rela}
\alpha_1=\alpha_2-2\beta \partial_t \psi, \quad  q_1=q_2-2\partial_t(\beta \partial_t \psi)  \quad \text{ in } \mathcal Q.
\end{equation}
Using \eqref{rela} and the identity $u_{0,2}=u_{0,1}+\psi$, a direct calculation gives
\begin{align*}
&\partial_t^3 (u_{0,1}+\psi)+\alpha_1\partial_t^2(u_{0,1}+\psi)-\Delta_g \partial_t(u_{0,1}+\psi)-c^2\Delta_g (u_{0,1}+\psi)+q_1\partial_t(u_{0,1}+\psi) \\
&=\partial_t^3\psi+\alpha_1\partial_t^2\psi-\Delta_g\partial_t\psi-c^2\Delta_g\psi
+q_1\partial_t\psi +F_1+\partial_t (\beta(\partial_t u_{0,1})^2)\\
&=\partial_t^3\psi+\alpha_1\partial_t^2\psi-\Delta_g\partial_t\psi-c^2\Delta_g\psi
+q_1\partial_t\psi+F_1+\partial_t(\beta(\partial_t(u_{0,2}-\psi))^2) \\
&=\partial_t^3\psi+\alpha_1\partial_t^2\psi-\Delta_g\partial_t\psi-c^2\Delta_g\psi
+q_1\partial_t\psi+\partial_t(\beta(\partial_t \psi)^2)+F_1+\partial_t(\beta(\partial_t u_{0,2})^2) \\
&-2\beta \partial_t \psi\partial_t^2 u_{0,2}-2\partial_t(\beta \partial_t \psi) \partial_t u_{0,2},
\end{align*}
we finally obtain that 
\begin{align*}
\partial_t^3 u_{0,2}+ \alpha_2 \partial_t^2 u_{0,2}-\Delta_g \partial_t u_{0,2}-c^2 \Delta_g u_{0,2} +q_2 \partial_t u_{0,2}=\partial_t(\beta(\partial_t u_{0,2})^2)  \\
+F_1+\partial_t^3\psi+\alpha_1\partial_t^2\psi-\Delta_g\partial_t\psi-c^2\Delta_g\psi
+q_1\partial_t\psi+\partial_t(\beta(\partial_t \psi)^2),
\end{align*}
and hence
\begin{equation}\label{Frela}
F_2=F_1+\partial_t^3\psi+\alpha_1\partial_t^2\psi-\Delta_g\partial_t\psi-c^2\Delta_g\psi
+q_1\partial_t\psi+\partial_t(\beta(\partial_t \psi)^2).
\end{equation}
This completes the proof.
\end{proof}

Next, we provide the proof of Corollary \ref{cor1.3}.
\begin{proof}
\textbf{Case i}: $\alpha_1=\alpha_2$ in $\mathcal Q$.  $\beta\neq 0$ in $\mathcal Q$. Using \eqref{rela}, we obtain
\begin{equation*}
2\beta\partial_t \psi=0.
\end{equation*}
Since $\beta\neq0$, we have $\partial_t \psi=0$, and hence 
\begin{equation*}
\psi=\psi(x).
\end{equation*}
Since $\psi(0,x)=0$, it follows that $\psi\equiv0$ in $\mathcal Q$. Thus we obtain the unique recovery of $(\alpha,q,\beta,F)$.

\textbf{Case ii}:  $q_1=q_2$, $\beta\neq 0$ in $\mathcal Q$.   Using \eqref{rela}, we obtain
\begin{equation*}
2\partial_t(\beta \partial_t \psi)=0.
\end{equation*}
Then we have 
\begin{equation*}
\beta(t,x) \partial_t \psi(t,x)=\phi(x).
\end{equation*}
Evaluating at $t=0$, we find $\phi(x)=\beta(0,x)\partial_t \psi(0,x)=0$. Using $\beta\neq0$ in $\mathcal Q$, we obtain $\partial_t\psi=0$ in $\mathcal Q$. Hence
\begin{equation*}
\psi=\psi(x).
\end{equation*}
Using the argument in Case i, we finally get the unique recovery of $(\alpha,q,\beta,F)$.

\textbf{Case iii}: $F_1=F_2$ in $\mathcal Q$.
Using \eqref{Frela}, we have 
\begin{equation}\label{eq:psi-zero}
\partial_t^3\psi+\alpha_1 \partial_t^2\psi-\Delta_g\partial_t\psi-c^2\Delta_g\psi+q_1\partial_t\psi
+\partial_t\bigl(\beta (\partial_t\psi)^2\bigr)=0
\quad \text{in } \mathcal Q,
\end{equation}
with  $\psi|_{\Gamma}=(\partial_{\nu} \partial_t \psi+c^2 \partial_{\nu} \psi)|_{\Gamma}=0$, 
$\psi(0,\cdot)=\partial_t \psi(0,\cdot)=\partial_t^2 \psi(0,\cdot)=0$ in $M$.

It remains to show that
\[
\psi\equiv 0 \qquad \text{in } \mathcal Q.
\]
Since $\psi\in E^{m+2}$, we have $\partial_t\psi\in C([0,T];H^{m+1}(M))$. 
As $m>n+1$, the Sobolev embedding theorem implies
$H^{m+1}(M)\hookrightarrow C(M)$,
and hence 
\begin{equation*}
\partial_t\psi\in C([0,T]\times M).
\end{equation*}
Since $\psi=0$ on $\Gamma$, differentiating with respect to $t$ yields
$\partial_t\psi=\partial_t^2\psi=0  \text{ on } \Gamma$.
Multiplying \eqref{eq:psi-zero} by $\partial_t^2\psi$, integrating over $M$, and using integration by parts together with the boundary conditions, we obtain
\begin{align}
&\frac12\frac{d}{dt}\|\partial_t^2\psi(t)\|_{L^2(M)}^2
+\frac12\frac{d}{dt}\|\nabla_g\partial_t\psi(t)\|_{L^2(M)}^2
+\int_M \alpha_1 |\partial_t^2\psi|^2\,dV_g
+I_c
+\int_M q_1\,\partial_t\psi\,\partial_t^2\psi\,dV_g \nonumber \\
=&
-\int_M \partial_t\!\bigl(\beta (\partial_t\psi)^2\bigr)\,\partial_t^2\psi\,dV_g,
\label{eq:basic-energy}
\end{align}
where
\[
I_c:=-\int_M c^2\Delta_g\psi\,\partial_t^2\psi\,dV_g .
\]

We now rewrite $I_c$ exactly. Integrating by parts, we have
\[
I_c
=
\int_M c^2 \langle \nabla_g\psi,\nabla_g\partial_t^2\psi\rangle_g\,dV_g
+
\int_M \langle \nabla_g(c^2),\nabla_g\psi\rangle_g\,\partial_t^2\psi\,dV_g.
\]
Hence
\begin{align}
I_c
&=
\frac{d}{dt}\int_M c^2 \langle \nabla_g\psi,\nabla_g\partial_t\psi\rangle_g\,dV_g
-\int_M c^2 |\nabla_g\partial_t\psi|_g^2\,dV_g  
+\int_M \langle \nabla_g(c^2),\nabla_g\psi\rangle_g\,\partial_t^2\psi\,dV_g .
\label{eq:Ic-exact}
\end{align}

Substituting \eqref{eq:Ic-exact} into \eqref{eq:basic-energy}, we obtain
\begin{align}
&\frac{d}{dt}\Bigl[
\frac12\|\partial_t^2\psi(t)\|_{L^2(M)}^2
+\frac12\|\nabla_g\partial_t\psi(t)\|_{L^2(M)}^2
+\int_M c^2 \langle \nabla_g\psi,\nabla_g\partial_t\psi\rangle_g\,dV_g
\Bigr] \nonumber\\
=&
-\int_M \alpha_1 |\partial_t^2\psi|^2\,dV_g
+\int_M c^2 |\nabla_g\partial_t\psi|_g^2\,dV_g
 \nonumber\\
&
-\int_M \langle \nabla_g(c^2),\nabla_g\psi\rangle_g\,\partial_t^2\psi\,dV_g
-\int_M q_1\,\partial_t\psi\,\partial_t^2\psi\,dV_g
-\int_M \partial_t\!\bigl(\beta (\partial_t\psi)^2\bigr)\,\partial_t^2\psi\,dV_g .
\label{eq:energy-pre}
\end{align}

We next add a multiple of $\|\nabla_g\psi(t)\|_{L^2(M)}^2$. Define
\[
\mathcal E(t)
:=
\frac12\|\partial_t^2\psi(t)\|_{L^2(M)}^2
+\frac12\|\nabla_g\partial_t\psi(t)\|_{L^2(M)}^2
+\int_M c^2 \langle \nabla_g\psi,\nabla_g\partial_t\psi\rangle_g\,dV_g
+\frac{\mu}{2}\|\nabla_g\psi(t)\|_{L^2(M)}^2,
\]
where $\mu>0$ will be chosen later.

It follows from \eqref{eq:energy-pre} that
\begin{align}
\mathcal E'(t)
=&
-\int_M \alpha_1 |\partial_t^2\psi|^2\,dV_g
+\int_M c^2 |\nabla_g\partial_t\psi|_g^2\,dV_g
-\int_M \langle \nabla_g(c^2),\nabla_g\psi\rangle_g\,\partial_t^2\psi\,dV_g  \nonumber\\
&
-\int_M q_1\,\partial_t\psi\,\partial_t^2\psi\,dV_g -\int_M \partial_t\!\bigl(\beta (\partial_t\psi)^2\bigr)\,\partial_t^2\psi\,dV_g
+\mu\int_M \langle \nabla_g\psi,\nabla_g\partial_t\psi\rangle_g\,dV_g .
\label{eq:Eprime}
\end{align}

We now estimate the right-hand side term by term. By Cauchy--Schwarz and Young's inequality,
\begin{align}
\left|\int_M \langle \nabla_g(c^2),\nabla_g\psi\rangle_g\,\partial_t^2\psi\,dV_g\right|
&\le
C\Bigl(\|\nabla_g\psi\|_{L^2(M)}^2+\|\partial_t^2\psi\|_{L^2(M)}^2\Bigr), \label{eq:c2-est}\\
\mu\left|\int_M \langle \nabla_g\psi,\nabla_g\partial_t\psi\rangle_g\,dV_g\right|
&\le
C\Bigl(\|\nabla_g\psi\|_{L^2(M)}^2+\|\nabla_g\partial_t\psi\|_{L^2(M)}^2\Bigr). \label{eq:mu-est}
\end{align}

Since $\partial_t\psi=0$ on $\partial M$, Poincar\'e's inequality gives
\[
\|\partial_t\psi\|_{L^2(M)}
\le C\|\nabla_g\partial_t\psi\|_{L^2(M)}.
\]
Therefore,
\begin{equation}\label{eq:q-est-new}
\left|\int_M q_1 \partial_t\psi\,\partial_t^2\psi\,dV_g\right|
\le
C\Bigl(\|\nabla_g\partial_t\psi\|_{L^2(M)}^2+\|\partial_t^2\psi\|_{L^2(M)}^2\Bigr).
\end{equation}

For the nonlinear term, since
\[
\partial_t\bigl(\beta(\partial_t\psi)^2\bigr)
=
(\partial_t\beta)(\partial_t\psi)^2+2\beta\,\partial_t\psi\,\partial_t^2\psi,
\]
we have
\begin{align}
\left|\int_M \partial_t\bigl(\beta(\partial_t\psi)^2\bigr)\,\partial_t^2\psi\,dV_g\right|
&\le
C\|\partial_t\psi\|_{L^\infty(M)}
\Bigl(\|\partial_t\psi\|_{L^2(M)}\|\partial_t^2\psi\|_{L^2(M)}
+\|\partial_t^2\psi\|_{L^2(M)}^2\Bigr) \nonumber\\
&\le
C
\Bigl(\|\nabla_g\partial_t\psi\|_{L^2(M)}^2+\|\partial_t^2\psi\|_{L^2(M)}^2\Bigr).
\label{eq:nonlinear-est-new}
\end{align}

Combining \eqref{eq:Eprime}--\eqref{eq:nonlinear-est-new}, we obtain
\begin{align}
\mathcal E'(t)
\le
C
\Bigl(
\|\partial_t^2\psi(t)\|_{L^2(M)}^2
+\|\nabla_g\partial_t\psi(t)\|_{L^2(M)}^2
+\|\nabla_g\psi(t)\|_{L^2(M)}^2
\Bigr).
\label{eq:Eprime-est}
\end{align}

On the other hand, by Young's inequality and the boundedness of $c$,
\[
\left|
\int_M c^2 \langle \nabla_g\psi,\nabla_g\partial_t\psi\rangle_g\,dV_g
\right|
\le
\frac14\|\nabla_g\partial_t\psi(t)\|_{L^2(M)}^2
+C\|\nabla_g\psi(t)\|_{L^2(M)}^2.
\]
Hence, choosing $\mu>0$ sufficiently large, we obtain the coercivity estimate
\[
\mathcal E(t)\simeq
\|\partial_t^2\psi(t)\|_{L^2(M)}^2
+\|\nabla_g\partial_t\psi(t)\|_{L^2(M)}^2
+\|\nabla_g\psi(t)\|_{L^2(M)}^2 .
\]
Therefore, \eqref{eq:Eprime-est} yields
\[
\mathcal E'(t)
\le
C \mathcal E(t).
\]
Moreover,
\[
\mathcal E(0)=0.
\]
Hence Gr\"onwall's inequality implies
\[
\mathcal E(t)=0,\qquad t\in[0,T].
\]
By the coercivity of $\mathcal E$, it follows that
\[
\partial_t^2\psi=0,\qquad \nabla_g\partial_t\psi=0,\qquad \nabla_g\psi=0
\quad \text{in } \mathcal Q.
\]
Since $\nabla_g\psi=0$ in $M$ for each $t$ and $\psi|_{\partial M}=0$, it follows that $\psi\equiv0$ in $\mathcal Q$.
This proves the unique recovery of $(\alpha,q,\beta,F)$.

\end{proof}

\section*{Acknowledgements}
 X. Xu is partially supported by the National Key Research and Development Program of China (No. 2024YFA1012303), National Natural Science Foundation of China (No. 12525112), and the Open Research Project of Innovation Center of Yangtze River Delta, Zhejiang University. T. Zhou is partially supported by the National Key Research and Development Program of China (No. 2024YFA1012301), the Zhejiang Provincial Basic Public Welfare Research Program [
Grant Number LDQ24A010001], and NSFC Grant 12371426. The authors would like to thank Song-Ren Fu for his helpful discussions and comments.

\section*{Conflict Of Interest Statement}

The authors declare that they have no conflict of interest.

\section*{Data Availability Statement}

Data sharing is not applicable to this article as no datasets were generated
or analyzed during the current study.

\bibliographystyle{abbrv}
\bibliography{ref}
\end{document}